\documentclass{amsart}

\usepackage{amsmath,amssymb,amsthm,mathtools}
\usepackage{graphicx}
\usepackage[hidelinks]{hyperref}
\usepackage{color}

\newcommand{\qua}{\hskip 0.4em \ignorespaces}
\def\arxiv#1{\relax\ifhmode\unskip\qua\fi
\href{http://arxiv.org/abs/#1}%
{\tt arXiv:\penalty -100\unskip#1}}

\def\MR#1{\relax\ifhmode\unskip\qua\fi
\href{https://mathscinet.ams.org/mathscinet-getitem?mr=#1}{\tt MR#1}}
\def\ZB#1{\relax\ifhmode\unskip\qua\fi
\href{https://zbmath.org/?q=an:#1}{\tt Zbl\:#1}}
\def\xox#1{\csname xx#1\endcsname}

\renewenvironment{thebibliography}[1]{
  \begin{oldthebibliography}{ABC12}\small
    \setlength{\itemsep}{.5ex}
    \setlength{\parskip}{0em}
}
{
  \end{oldthebibliography}
}

\newcommand{\N}{\mathbb N}
\newcommand{\Z}{\mathbb Z}
\newcommand{\Q}{\mathbb Q}
\newcommand{\Zp}{\Z/p\Z}
\newcommand{\Zr}{\Z/r\Z}

\newcommand{\sZ}{s_{\mathbb Z}}
\newcommand{\lkf}{\lambda}
\newcommand{\co}{\colon}
\newcommand{\Jac}[2]{\left(\frac{#1}{#2}\right)}
\newcommand{\coker}{\operatorname{coker}}
\renewcommand{\epsilon}{\varepsilon}

\newtheorem{theorem}{Theorem}[section]
\newtheorem{lemma}[theorem]{Lemma}
\newtheorem{proposition}[theorem]{Proposition}

\newtheorem{question}[theorem]{Question}
\theoremstyle{definition}
\newtheorem{definition}[theorem]{Definition}

\theoremstyle{remark}
\newtheorem{remark}[theorem]{Remark}
\newtheorem{example}[theorem]{Example}

\numberwithin{equation}{section}
\begin{document}

\title{Lens spaces that cannot be obtained by two integral surgeries}

\author{Marco Golla}
\address{CNRS and Aix-Marseille Universit\'e, Institut de Mathématiques de Marseille (I2M) - UMR 7373, 3 place Victor Hugo, 13331 Marseille, France}
\email{marco.golla@univ-nantes.fr}

\author{Marc Kegel}
\address{Universidad de Sevilla, Avda.\ Reina Mercedes s/n, 41012 Sevilla, Spain}
\email{mkegel@us.es}

\date{\today}

\begin{abstract}
In this article, we demonstrate that there exists an infinite family of lens spaces $L(p,q)$ that cannot be obtained from the $3$-sphere by two integral surgeries. The first such lens space in our family is $L(177,29)$.
\end{abstract}

\keywords{Lens spaces, surgery numbers}

\makeatletter
\@namedef{subjclassname@2020}{\textup{2020} Mathematics Subject Classification}
\makeatother
\subjclass[2020]{57K10; 57K14, 57R65}

\maketitle

\section{Introduction}

For a closed oriented $3$-manifold $Y$ we define the \textit{integral surgery number} $\sZ(Y)$ to be the minimal number of components of a framed link in $S^3$ on which surgery produces $Y$.
This is the integral version of the surgery-number invariants introduced by Auckly~\cite{Auckly}. Computing surgery numbers is notoriously difficult and in general remains poorly understood. For some recent progress, see for example~\cite{HKL,HomLidman,LiuPiccirillo,Azarpendar,DLME,GuoYan}.

Even for lens spaces, determining $\sZ$ appears to be difficult and was explicitly posed by Melvin as Problem~11.2.9 in~\cite{BIRS} (see also~\cite{mathoverflow}). Homology gives elementary necessary obstructions to a lens space being obtainable by a single integral surgery~\cite{FintushelStern}. The complete classification of lens spaces that are obtained from $S^3$ by a single integral surgery is considerably deeper and follows from Greene's resolution of the lens space realization problem~\cite{Greene}. 
On the other hand, it remained open whether there exists a lens space that cannot be obtained from $S^3$ by integral surgery on a $2$-component link. The purpose of this note is to give an infinite family of such lens spaces.

\begin{theorem}\label{thm:main}
There exist infinitely many lens spaces with integral surgery number~$\sZ=3$. More precisely, for each positive integer $a \equiv 7 \pmod 8$, we have
$$\sZ\big(L(3a^2+5a-5,3a+8)\big) =3.$$
\end{theorem}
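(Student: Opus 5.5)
Two things need proving: the upper bound $\sZ\le 3$, and the lower bound $\sZ\ge 3$, the latter ruling out both one and two components.

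\emph{Upper bound.} Write $p=3a^2+5a-5$ and $q=3a+8$, and expand $p/q$ as a negative continued fraction $[a_1,\dots,a_n]^-$. The linear chain of unknots with framings $-a_i$ gives $L(p,q)$. We then perform Rolfsen twists and slam-dunks to shorten it. Using $p=(a-1)q+\dots$, one can check that $p/q=[a-1,\,x,\,y]^-$ for suitable explicit values of $x,y$ depending on $a$. Concretely, $p-(a-1)q=3a^2+5a-5-(3a^2+5a-8)=3$, so $p/q=(a-1)+3/q$. Hence $p/q=[a,\,c_2,\dots]^-$ with $q/(q-3)=(3a+8)/(3a+5)$. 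Since $(3a+8)/(3a+5)=[2,2,\dots]^-$ is long, I would instead allow nonnegative and positive framings in the chain and use Rolfsen twists to collapse runs of $2$'s. The aim is a three-component integral diagram, namely a chain or a small non-chain link. At worst one can use the fact that every lens space is surgery on a two-bridge-type link, and find an explicit three-component link by computer-aided search followed by Kirby calculus.

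\emph{Lower bound, one component.} If $L(p,q)=S^3_{\pm p}(K)$, then the linking form must be $\pm 1/p$ up to squares, so $\pm q$ must be a square mod $p$. We check this fails for these $p,q$ with $a\equiv 7\pmod 8$ via Jacobi symbols, presumably the reason for the congruence condition. If it does not fail, we invoke Greene's list instead.

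\emph{Lower bound, two components, which is the main obstacle.} Suppose $L(p,q)$ is surgery on a two-component link with framing matrix $Q$. Then $Q$ is a $2\times2$ integral symmetric matrix with $|\det Q|=p$, and its linking form $-Q^{-1}$ is isomorphic to the lens space form $\pm q'/p$. Further, the trace $X$ of the surgery is a simply connected $4$-manifold with $b_2=2$. I would first classify possible intersection forms. Indefinite forms are (unimodular-like) constrained by the linking form; for each sign I would use $d$-invariant (Heegaard Floer correction term) inequalities. If $X$ is negative definite, Ozsváth–Szabó gives
\[
\max_{c}\frac{c^2+2}{4}\le d(L(p,q),\mathfrak t)
\]
for all characteristic covectors $c$ restricting to $\mathfrak t$. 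For indefinite forms, we glue or use the $b_2^+=1$ version, or use both orientations: $-X$ bounds $-L(p,q)$, so one of $\pm L$ bounds a form with $b^+\le1$. Since lens spaces bound sharp negative-definite plumbings, any negative-definite $X$ yields, by Greene-style lattice embedding (Donaldson applied to $X\cup -P$), an embedding of the plumbing lattice into $-\mathbb Z^{n+2}$. This is a strong combinatorial restriction which I would show fails by analysing the continued fraction. The indefinite case ($b^+=b^-=1$) is the hard part. There I would use the $d$-invariant inequality for $b^+=1$ manifolds obtained from a surgery on one component, or equivalently realise $L$ as surgery on a knot in $S^1\times S^2$-like or in a lens space of order $|Q_{11}|$. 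This gives the constraint $d(L,\mathfrak t)-d(\cdot)$ bounded, and we compare with the explicit $d$-invariants of $L(p,q)$ computed by the recursive formula. The congruence $a\equiv 7\pmod 8$ is presumably needed to kill residual cases via parity or Jacobi-symbol arguments on $\det Q=\pm p$, for example deciding whether the linking form $\pm q/p$ is represented by $Q^{-1}$ for any $Q$ of the given signature. I expect this indefinite two-component case to be the crux.
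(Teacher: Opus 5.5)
Your outline has the right overall shape: an explicit three-component chain, Donaldson lattice embeddings for definite fillings, a separate argument for indefinite ones, and a linking-form check for one component. But the indefinite two-component case, which you yourself flag as the crux, has no working argument.

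The Heegaard Floer tools you list do not reach it. The Ozsv\'ath--Szab\'o inequality requires $b_2^+=0$. Reversing orientation keeps $b_2^+=b_2^-=1$, so passing to $-X$ gains nothing. Viewing $L$ as surgery on the second component inside $S^3_{Q_{11}}(K_1)$ gives an uncontrolled 3-manifold, in general not a lens space, whose correction terms you do not know.

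The missing idea is purely arithmetic. Let $p\equiv1\pmod 8$ and $Q=\begin{pmatrix}\alpha&\beta\\ \beta&\gamma\end{pmatrix}$ with $\det Q=-p$ and $\coker Q$ cyclic.
\begin{itemize}
\item For each prime $r\mid p$, the entries $\alpha,\beta,\gamma$ cannot all vanish mod $r$, otherwise $\coker Q\twoheadrightarrow(\Z/r\Z)^2$. So you can choose $t$ with $\alpha+2\beta t+\gamma t^2$ prime to $p$. Changing basis to $(e_1+te_2,e_2)$, you may assume $\gcd(\alpha,p)=1$.
\item Then $\lambda(e_2,e_2)=\alpha/p$ and $e_2$ generates, so $J(\lambda)=\Jac{\alpha}{p}$.
\item From $\beta^2\equiv p\pmod{\alpha}$, $p$ is a nonzero square modulo every odd prime $r\mid\alpha$. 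Hence $\Jac rp=\Jac pr=1$ by reciprocity, and $\Jac{-1}p=\Jac2p=1$. So every indefinite binary presentation has $J(\lambda)=+1$.
\item On the other hand, using $p=(a-1)(3a+8)+3$, $p\equiv1\pmod8$ and $3a+8\equiv1\pmod4$, you get $J(L_a)=\Jac{3a+8}{p}=\Jac{3}{3a+8}=\Jac{3a+8}{3}=-1$.
\end{itemize}
This is exactly where $a\equiv7\pmod8$ enters. Your one-component Jacobi-symbol check is just the rank-one instance of this argument.

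Two smaller points. First, the upper bound is one line from your identity $p=(a-1)q+3$. Since $q=3(a+3)-1$, you have $p/q=(a-1)-1/(-q/3)$ and $-q/3=-(a+3)-1/(-3)$, so $p/q=[a-1,-a-3,-3]^-$. No Rolfsen twists or computer search are needed.

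Second, in the definite case you must treat both orientations.
\begin{itemize}
\item Positive-definite fillings force an embedding of the lattice of $P(p,q)$, with $p/q=[a,2^{[a+1]},3,2]^-$, into $-\Z^{a+6}$. The forced embedding gives $a=1+3m^2+n^2$, impossible for $a\equiv7\pmod8$, so the congruence is used here as well.
\item Negative-definite fillings force an embedding of $P(p,p-q)$, with $p/(p-q)=[2^{[a-2]},a+4,3]^-$, into $-\Z^{a+2}$. This one is ruled out by a parity count on the square of the $(a+4)$-vector.
\end{itemize}
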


\subsection*{Lower bounds for integral surgery numbers}

The idea of the proof is to use Donaldson's diagonalisation theorem~\cite{Donaldson-diagonalisation} to obstruct the existence of definite fillings with $b_2 = 2$, see for example~\cite{Lisca2007rationallens,AMP}. This rules out integral surgery presentations on $2$-component definite links. 

We exclude surgery along $2$-component indefinite links by analyzing the underlying number theory of the linking form. Theorem~\ref{thm:criterion} describes an elementary general obstruction for a lens space $L(p,q)$ to arise by surgery along an indefinite $2$-component link. This obstruction, together with the definite obstruction, is enough to prove Theorem~\ref{thm:main}, but it only works if $p\equiv 1 \pmod 8$ (see Proposition~\ref{prop:representability}).

We also give other, less elementary, criteria in Theorem~\ref{thm:criteria2}, that can be used to obstruct surgery along an indefinite $2$-component link. In Section~\ref{s:examples}, we present three different lens spaces with integral surgery number $3$ that are obstructed with Theorem~\ref{thm:criteria2} but cannot be obstructed via the obstruction from Theorem~\ref{thm:criterion}.

Indeed, these criteria apply more generally to all 3-manifolds whose first homology is finite and cyclic.

\begin{remark}
    The definite filling obstruction extends immediately to higher surgery numbers~\cite{AMP}. However, the linking-form obstruction is essentially restricted to rank 2. Indeed, it follows from work of Edmonds~\cite{Edmonds} that every cyclic linking form has an indefinite integral presentation by a $(3\times 3)$-matrix. We refer to Section~\ref{sec:Edmonds} for details.
\end{remark}

\begin{remark}
 We also note that the recent instanton bound $q_3$ for the surgery number from~\cite{DLME} does not detect the examples considered here. In fact, their invariant $q_3$ vanishes on every lens space $L(p,q)$ with $p$ odd.

Indeed, any lens space can be obtained from the unknot by positive or negative surgery coefficients, and since $p$ is odd, $L(p,q)$ is a $(\Z/2\Z)$-homology sphere, so Corollary~2 of~\cite{DLME} applies and gives $q_3(L(p,q))=0$.
\end{remark}

\subsection*{Upper bounds on integral surgery numbers}
Note that many lens spaces can be obtained by integer surgery along a $2$-component link:
\begin{itemize}
    \item Lens spaces bounding rational homology balls~\cite{Lisca2007rationallens, BakerBuck_Lecuona}.
    \item Lens spaces $L(p,q)$ with $q\equiv \pm 1$ $\operatorname{mod}(p)$ and lens spaces obtained by rational surgery on torus knots~\cite{Moser}.
    \item Lens spaces obtained by surgery on a link consisting of two Berge--Gabai knots~\cite{Berge1991solidtorus,Gabai1989solidtorus} (see also the comment by Ian Agol in~\cite{mathoverflow}).
\end{itemize}

Using the above constructions, we performed computer experiments to verify that most lens spaces for small values of $p$ and $q$ can be obtained by two integer surgeries, but the following question remains open.

\begin{question}
Is the integral surgery number of lens spaces universally bounded? For instance, is $\sZ(L(p,q)) \le 3$ for every $p,q$?
\end{question}

\subsection*{Embedding lens spaces in 4-manifolds}

The aforementioned paper of Edmonds~\cite{Edmonds} shows that every lens space \emph{locally-flatly} embeds in $\#^2(\mathbb{CP}{}^2\# \overline{\mathbb{CP}}{}^2)$, in $\#^4 S^2 \times S^2$, and in $\#^8\mathbb{CP}{}^2$.

The last two statements were known not to hold for \emph{smooth} embeddings: the case of $\#^8\mathbb{CP}{}^2$ is a consequence of Donaldson's diagonalisation theorem (as in~\cite{AMP}), and that of $\#^4 S^2 \times S^2$ was shown in~\cite[Theorem~4.14]{AcetoGollaLarson} as a corollary of Furuta's 10/8-Theorem.

A consequence of the techniques of this paper is the following result, which was suggested to us by Livio Ferretti and Brendan Owens.

\begin{theorem}\label{thm:noembedding}
    There are infinitely many lens spaces which do not smoothly embed in $\#^2(\mathbb{CP}{}^2\# \overline{\mathbb{CP}}{}^2)$. For example, $L(177,29)$ is such a lens space.
\end{theorem}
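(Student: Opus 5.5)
Suppose $Y=L(p,q)$ with $p=3a^2+5a-5$ (or $L(177,29)$, the case $a=7$) embeds smoothly in $X=\#^2(\mathbb{CP}{}^2\# \overline{\mathbb{CP}}{}^2)$. The plan is to turn this embedding into exactly the kind of rank-two filling data that the proof of Theorem~\ref{thm:main} rules out.

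First, I would cut $X$ along $Y$. Since $H_1(Y)=\Z/p\Z$ is finite and $Y$ is separating (it is a rational homology sphere in a simply connected manifold), we get a decomposition $X=W_1\cup_Y W_2$ with $\partial W_1=Y$ and $\partial W_2=-Y$. Mayer--Vietoris over $\Q$ gives $b_2(W_1)+b_2(W_2)=4$ and $b_2^\pm(W_1)+b_2^\pm(W_2)=2$. We can also arrange $b_1(W_i)=0$ rationally. The intersection forms $Q_{W_i}$ (modulo torsion) then present the linking form $\pm\lambda_Y$ up to the usual index correction, and $Q_{W_1}\oplus Q_{W_2}$ embeds with finite index into the unimodular form $2\langle 1\rangle\oplus 2\langle -1\rangle$.

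Next, I would case on $(b_2^+(W_1),b_2^-(W_1))$.
\begin{itemize}
\item If some $W_i$ is definite with $b_2\le 2$, then closing it off by the other side gives a definite form of rank at most $2$ embedding into a diagonal lattice. I would use the Donaldson-type lattice embedding argument from the definite part of the proof of Theorem~\ref{thm:main}: it rules out $Y$ or $-Y$ bounding a definite manifold with $b_2=2$. The cases $b_2\le1$ are excluded by the same argument, or more simply by homology, since $L(p,q)$ is not an integral surgery on a knot.
\item If $W_1$ is definite with $b_2=3$ or $4$, then $W_2$ is definite of rank at most $1$ with the opposite sign. Gluing $W_2$ to the relevant definite filling of $-Y$ then reduces to the same lattice obstruction.
\item The remaining case is that both $W_i$ are indefinite of signature $0$ with $b_2=2$. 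Here the forms are $2\times2$ indefinite integral matrices whose discriminant form is $\pm\lambda_Y$, possibly after passing to index-$d$ sublattices with $d^2\mid p$.
\end{itemize}

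In the indefinite case I would invoke Theorem~\ref{thm:criterion}, together with Proposition~\ref{prop:representability} using $p\equiv1\pmod 8$. These obstruct any indefinite rank-$2$ integral presentation of $\lambda_{L(p,q)}$ of either orientation. Note that Theorem~\ref{thm:criterion} is phrased purely in terms of the linking form, not surgery.

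The main obstacle is that $W_i$ need not be a $2$-handlebody. Its torsion and the index of $H_2(W_i)$ in its dual mean the boundary linking form is only a subquotient of the discriminant of $Q_{W_i}$. I would first try to show that the index is trivial. Exactness of $H_2(X)\to H_2(X,W_j)\cong H_2(W_i,Y)$ together with unimodularity of $X$ should force $H_1(W_i)$ to be torsion with the two torsion orders multiplying to $p$ appropriately. If $p$ is squarefree, as for $177=3\cdot59$, there is no room for metabolizers. I would then check squarefreeness, or handle square factors by the same criterion applied to $p/d^2$, for the whole family $a\equiv7\pmod 8$. That would complete the argument, giving infinitely many examples and in particular $L(177,29)$.
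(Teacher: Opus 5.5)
For a single squarefree example such as $L(177,29)$, where $177=3\cdot 59$, your outline is essentially the paper's argument. You cut $X$ along $Y$ and use squarefreeness of $p$ to show both pieces have $H_1=0$; this is the paper's Lemma~\ref{lem:splitting}, and your ``no room for metabolizers'' is only a sketch of it, though the idea is right, since $|H_1(W_i)|^2$ must divide $p$. You then feed the piece with $b_2\le 2$ into the definite Donaldson obstruction or into Theorem~\ref{thm:criterion}. There is a small slip in your second bullet: it is vacuous, because $b_2^\pm(W_i)\le 2$ means no $W_i$ is definite of rank $3$ or $4$. Those are exactly the cases where the other piece has $b_2\le 1$, which your first bullet already covers. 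The paper handles all cases uniformly by stabilising the piece with $b_2\le 2$ by copies of $\mathbb{CP}{}^2$ until $b_2=2$, and then citing Theorem~\ref{thm:nobound}.

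The genuine gap is the claim that there are infinitely many such lens spaces. Your plan is to ``check squarefreeness \dots for the whole family $a\equiv 7\pmod 8$'', but this is false. For example, $a=31$ gives $p=3\cdot 31^2+5\cdot 31-5=3033=3^2\cdot 337$, and in fact $9\mid p$ for a proportion $1/9$ of the family.

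Your fallback, ``handle square factors by the same criterion applied to $p/d^2$'', is not yet an argument. Theorem~\ref{thm:criterion} and Proposition~\ref{prop:indefinite} assume $H_1(W)=0$, that is, a $2\times 2$ form whose cokernel is all of $\Z/p\Z$. You would need an additional lemma: if an indefinite rank-$2$ piece has torsion $H_1$ of order $d$, its intersection form modulo torsion presents a cyclic subquotient $\langle \pm q/(p/d^2)\rangle$ of $\lambda_Y$. You do not supply this. The arithmetic itself would survive, since for odd $d$ one has $p/d^2\equiv p\pmod 8$ and $\Jac{q}{p/d^2}=\Jac qp$. The missing piece is the topological and lattice-theoretic step.

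The paper sidesteps this entirely. It only needs infinitely many squarefree members, and Lemma~\ref{lem:number_theory} shows that $f(n)=192n^2+376n+177=3(8n+7)^2+5(8n+7)-5$ is squarefree for a set of $n$ of positive density. The proof is a sieve:
\begin{itemize}
\item $f(n)$ is odd;
\item $9\mid f(n)$ if and only if $n\equiv 3\pmod 9$;
\item $p^2\nmid f(n)$ for $p=5,17$, using $12f(n)=(48n+47)^2-85$;
\item for every other prime $p$, Hensel lifting shows $p^2\mid f(n)$ for at most $2/p^2$ of all $n$.
\end{itemize}
Together these give density at least $\frac89-2\bigl(\frac{\pi^2}{6}-\frac54\bigr)>0$. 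Without a number-theoretic input of this kind, your proposal only proves the statement for whatever individual values of $a$ you can verify by hand.
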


\subsection*{Acknowledgements}
This project was motivated by the Mathoverflow question \cite{mathoverflow}. We started the collaboration and developed the first obstructions during the semester program \textit{Braids} (Feb 1--May 6, 2022) at the Institute for Computational and Experimental Research in Mathematics (ICERM). We thank
ICERM for the invitation and financial support.

We thank Ian Agol for sharing his insight about this problem at~\cite{mathoverflow}. Moreover, we thank Agniva Roy and Hugo Zhou for interesting discussions. They had also independently compiled large databases of lens spaces with integral surgery number 2; comparing our data with theirs revealed an error in an earlier version of our code. We thank Brendan Owens for useful discussions, for pointing us to relevant code in GAP, and for suggesting, together with Livio Ferretti, Theorem~\ref{thm:noembedding}. We also thank Francesco Lin for help with a reference. We acknowledge the use of GAP~\cite{GAP} and Sage~\cite{Sagemath} during the exploratory phase of this project. MK thanks Soheil Azarpendar for useful discussion, in particular for pointing to~\cite{AMP} and for mentioning the potential of the $10/8$-theorem to obstruct indefinite fillings.

\subsection*{Statement on AI use}
MK acknowledges assistance from various AI tools for checking computations and to discuss possible approaches during the development of this project. In particular, MK used ChatGPT Sol in developing and checking an earlier version of the proof of Lemma~\ref{lem:primitive-diagonal}, for explanations and computations of Jacobi symbols and related arithmetic and congruence calculations. 

The proof of Lemma~\ref{lem:primitive-diagonal} appearing in the present manuscript was subsequently revised and improved by the authors. 
All mathematical results and arguments in the final manuscript were established and verified by the authors, who take full responsibility for their correctness.

We also acknowledge that Zhengbo Qu used GPT Astra~6 to find examples of lens spaces with integral surgery number at least 3.

\subsection*{Individual grant support}
MK is supported by a Ram\'on y Cajal grant (RYC2023-043251-I) and by the
project PID2024-157173NB-I00 funded by MCIN/AEI/10.130\-39/501100011033, ESF+
and FEDER, EU; and by a VII Plan Propio de Investigaci\'on y Transferencia
(SOL2025-36103) of the University of Sevilla.

\section{Indefinite surgeries, linking forms, and Jacobi symbols}

In this section, we define an invariant of the linking form of homology lens spaces in terms of Jacobi symbols and study its relation to surgery presentations. 
We start by recalling the definitions and some properties of Jacobi symbols. A standard reference for that is~\cite{NumberTheory}.

\subsection{Jacobi symbols}\label{ss:jacobi}

Let $r$ be an odd prime.
We say that $x$ is a quadratic residue modulo $r$ if there exists $y \in \Zr$ such that $y^2 \equiv x$.
The \emph{Legendre symbol} is the map
\begin{align*}
&\Zr \longrightarrow \{0,\pm 1\}\\
&x \longmapsto \Jac xr := \left\{
\begin{array}{ll}
0 & \text{if $x = 0$},\\
1 & \text{if $x \neq 0$ and is a quadratic residue modulo $r$},\\
-1 & \text{if $x$ is not a quadratic residue modulo $r$}.\\
\end{array}
\right.
\end{align*}
Note that $\Jac xr \equiv x^{\frac{r-1}2} \pmod r$.
The Legendre symbol is multiplicative and it satisfies the quadratic reciprocity law: if $r$ and $s$ are odd primes, then
\[
\Jac sr \Jac rs = (-1)^{\frac{(r-1)(s-1)}4}.
\]

Let $p > 1$ be an odd integer, $p = r_1^{\alpha_1}\cdots r_\ell^{\alpha_\ell}$ be its prime factorisation, and $x$ be an integer.
The \emph{Jacobi symbol} of $x$ modulo $p$ is
\[
\Jac xp = \prod_{i=1}^\ell \Jac x{r_i}^{\alpha_i}.
\]
Unlike the Legendre symbol, the Jacobi symbol does not measure precisely whether a given integer is a quadratic residue modulo $p$. Nevertheless, $\Jac{x^2}p = 1$ for every $x$ coprime with $p$.

Much like the Legendre symbol, however, the Jacobi symbol is multiplicative and satisfies quadratic reciprocity: if $p$ and $q$ are odd integers, then
\[
\Jac qp \Jac pq = (-1)^{\frac{(p-1)(q-1)}4}.
\]
Moreover,
\begin{align*}
\Jac {-1}p &= \left\{
\begin{array}{ll}
1 & \text{if $p \equiv 1 \pmod 4$,}\\
-1 & \text{if $p \equiv 3 \pmod 4$,}
\end{array}
\right.\\
\Jac {2}p &= \left\{
\begin{array}{ll}
1 & \text{if $p \equiv 1,7 \pmod 8$,}\\
-1 & \text{if $p \equiv 3,5 \pmod 8$.}
\end{array}
\right.
\end{align*}

\subsection{The Jacobi symbol of a homology lens space}

Recall that, if $Y$ is a rational homology 3-sphere, the linking form on $Y$ is a non-degenerate symmetric bilinear form
\[
\lambda_Y\co H_1(Y) \otimes H_1(Y) \to \Q/\Z.
\]
(The linking form can be defined for arbitrary closed, oriented 3-manifolds, but we stick to rational homology spheres here for convenience.)
This form can be presented in terms of integral surgery presentations of $Y$, as follows.
Suppose that $Y$ is the boundary of a 4-manifold $W$ with $H_1(W) = 0$ and let $Q$ be a matrix presenting the intersection form of $W$ in some basis. We view $Q$ simultaneously as a bilinear form (on $H_2(W)$) and as an endomorphism (of the same group). For instance, this situation occurs when $Y$ is obtained as surgery along a framed link $L$ in $S^3$, and in this case we can choose $Q$ to be the linking matrix of $L$.

Then $H_1(Y) \cong \coker Q$, and $Q$ induces a $\Q/\Z$-valued bilinear form, presented by $-Q^{-1}$, on $H_1(Y)$. (Note that there are two sign conventions in the literature for the linking form. Here we adopt the one from~\cite[Section 5.3]{GS}.)

With a slight abuse of terminology, we say that a $\Q/\Z$-valued bilinear form $\lambda$ on a finite abelian group $G$ is \emph{presented} by an integral symmetric bilinear form $Q$ on $\Z^m$ if $(\coker Q, -Q^{-1}) \cong (G,\lambda)$ as $\Q/\Z$-valued bilinear forms.

We now focus on homology lens spaces, i.e.\ 3-manifolds whose first homology group is finite and cyclic.
Let $p$ be a positive integer. We denote the $\Q/\Z$-valued bilinear form $\lambda$ on $\Zp$ with $\langle\frac ap\rangle$ if there exist a generator $x \in \Zp$ and an integer $a$ such that $\lambda(x,x) = \frac ap \in \Q/\Z$. We can choose $0 \le a < p$ (although we will not always do so). The form $\lambda$ is non-degenerate if and only if $\gcd(a,p) = 1$.

Very concretely, if we look at lens spaces, the surgery presentation coming from a linear plumbing of spheres shows that the linking form of $L(p,q)$ is presented by the matrix $(q/p)$.
That is to say, there is a generator $x \in H_1(L(p,q))$, which incidentally is the core of one of the solid tori in the genus-1 Heegaard splitting of $L(p,q)$, such that $\lambda(x,x) = \frac qp$.
In other words, $\lambda_{L(p,q)} \cong \langle \frac qp\rangle$.

\begin{lemma}
Let $p$ be a positive odd integer and $\lambda = \langle \frac qp\rangle$ be a linking form on $\Zp$.
The Jacobi symbol $\Jac qp$ is independent of the choice of $q$.
The linking form is non-degenerate if and only if $\Jac qp \neq 0$.
\end{lemma}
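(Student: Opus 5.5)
The plan is to first pin down exactly how much freedom there is in the integer $q$, and then use two properties of the Jacobi symbol: it is multiplicative, and $\Jac{x^2}p = 1$ for $x$ coprime to $p$. The integer $q$ depends on two choices: the generator $x$ of $\Zp$, and the integer representative of the class $\lambda(x,x) = \frac qp \in \Q/\Z$.

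First, fix the generator $x$. Then $\frac qp$ is determined modulo $\Z$, so $q$ is determined modulo $p$. The Jacobi symbol $\Jac qp$ only depends on $q \bmod p$, because each Legendre factor $\Jac q{r_i}$ only depends on $q \bmod r_i$. So the second choice does not affect $\Jac qp$.

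Next, change the generator. Any other generator has the form $x' = ux$ with $u$ an integer coprime to $p$. Bilinearity gives
\[
\lambda(x',x') = u^2\lambda(x,x) = \frac{u^2 q}{p},
\]
so the new value of $q$ is $q' \equiv u^2 q \pmod p$. By multiplicativity,
\[
\Jac{q'}p = \Jac{u^2}p\Jac qp = \Jac qp,
\]
using $\Jac{u^2}p = 1$ because $\gcd(u,p) = 1$. This proves that $\Jac qp$ is independent of the choice of $q$.

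For the non-degeneracy statement, the paper already notes that $\lambda$ is non-degenerate if and only if $\gcd(q,p) = 1$. (If needed: $\lambda(ax,bx) = abq/p$, so the adjoint map $\Zp \to \Q/\Z$ is injective exactly when $q$ is a unit modulo $p$.) So it remains to show that $\Jac qp \neq 0$ if and only if $\gcd(q,p) = 1$. By definition, $\Jac qp$ is a product of Legendre symbols $\Jac q{r_i}^{\alpha_i}$ with $\alpha_i \geq 1$. Each Legendre factor lies in $\{0,\pm1\}$, and it is $0$ exactly when $r_i \mid q$. Hence the product vanishes if and only if some prime divisor of $p$ divides $q$, that is, if and only if $\gcd(q,p) \neq 1$.

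The only point requiring care is the case $p = 1$, where the Jacobi symbol is an empty product. There I would adopt the convention $\Jac q1 = 1$; this is consistent, since the form on the trivial group is non-degenerate. Beyond that there is no real obstacle: the argument is bookkeeping of the two choices plus the square-invariance property recalled in Section~\ref{ss:jacobi}.
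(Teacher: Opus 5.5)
Your proof is correct and follows essentially the same route as the paper. Both reduce to a change of generator $x' = ux$ with $\gcd(u,p)=1$, obtain $q' \equiv u^2 q \pmod p$, and conclude by multiplicativity and $\Jac{u^2}p = 1$, with non-degeneracy read off from $\gcd(q,p)=1$; your explicit remarks that $\Jac qp$ depends only on $q \bmod p$ and on the convention for $p=1$ fill in points the paper leaves implicit.
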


\begin{proof}
First off, $\lambda$ is non-degenerate if and only if $\gcd(q,p) = 1$, which in turn is equivalent to $\Jac qp \neq 0$, which proves the last statement.

Now, let $q'$ be another integer such that $\langle \frac{q'}p\rangle \cong \langle \frac qp\rangle$.
This means that there exist two generators $x, x' \in \Zp$ such that $\lambda(x,x) = \frac qp$ and $\lambda(x',x') = \frac{q'}p$.
Since $x$ and $x'$ both generate, there exists $a \in \Zp$, coprime with $p$, such that $x' = ax$.
But then
\[
\frac{q'}{p} = \lambda(x',x') = \lambda(ax,ax) = a^2\lambda(x,x) = \frac{a^2q}{p},
\]
and since $\gcd(a,p) = 1$ we have
\[
\Jac{q'}p = \Jac{a^2q}p = \Jac{a^2}p \Jac qp = \Jac qp,
\]
which proves independence of the presentation.
\end{proof}

\begin{definition}
Let $\lambda \cong \langle \frac qp \rangle$ be a $\Q/\Z$-valued bilinear form on $\Zp$, with $p$ odd.
The \emph{Jacobi symbol} of $\lambda$ is $J(\lambda) = \Jac qp$.

Let $Y$ be a rational homology 3-sphere such that $H_1(Y) \cong \Zp$ for some odd $p$.
The \emph{Jacobi symbol} of $Y$ is $J(Y) = J(\lambda_Y)$.
\end{definition}

\begin{example}
Since $\lambda_{L(p,q)}\cong \langle \frac qp \rangle$, we have $J(L(p,q)) = \Jac qp$.
\end{example}

\subsection{An indefinite rank-\texorpdfstring{$2$}{2} obstruction}

The following result is the main technical result of this article, and will yield to the obstruction for getting a rational homology sphere by surgery on an indefinite $2$-component link.

\begin{proposition}\label{prop:indefinite}
Let $p\equiv1\pmod8$, and $Q$ be a symmetric $(2\times2)$-matrix with integral coefficients such that $\det Q=-p$ and $\coker(Q)\cong\Zp$. Let $\lkf$ be the bilinear form on $\Zp$ presented by $Q$. Then $J(\lkf)=+1$.
\end{proposition}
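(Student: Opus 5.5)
The plan is to turn the statement into a fact about which integers an indefinite binary quadratic form of discriminant $4p$ represents, and then apply quadratic reciprocity.

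\emph{Setup.} Write $Q=\begin{pmatrix} a & b\\ b & c\end{pmatrix}$ with $ac-b^2=-p$. Since $\coker Q$ is cyclic of order $p$, we have $\gcd(a,b,c)=1$, because any common factor $d>1$ would give $\coker Q \supseteq (\Z/d\Z)^2$. The linking form is presented by
\[
-Q^{-1}=\frac1p\begin{pmatrix} c & -b\\ -b & a\end{pmatrix}.
\]
So if $v=(x,y)\in\Z^2$ maps to $w\in\coker Q$, then $\lkf(w,w)=f(x,y)/p$, where $f(x,y)=cx^2-2bxy+ay^2$. Choose $v$ mapping to a generator of $\coker Q\cong\Zp$. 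If $v=d\,v'$ with $v'$ primitive, then $d$ is coprime to $p$, so $v'$ also maps to a generator. Hence we may assume $\gcd(x,y)=1$. Then $q:=f(x,y)$ satisfies $\lkf\cong\langle\frac qp\rangle$ and $\gcd(q,p)=1$. It therefore suffices to show $\Jac qp=1$.

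\emph{Reduction to odd prime factors.} Write $q=\pm 2^e m$ with $m>0$ odd. Since $p\equiv 1\pmod 8$, the supplementary laws give $\Jac{-1}p=\Jac 2p=1$, so $\Jac qp=\Jac mp$. Quadratic reciprocity, with $p\equiv1\pmod4$, gives $\Jac mp=\Jac pm=\prod_{r\mid m}\Jac pr^{v_r(m)}$. So it is enough to prove $\Jac pr=1$ for every odd prime $r$ dividing $q$. Note that such an $r$ does not divide $p$.

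\emph{Key step: $p$ is a square modulo $r$.} This is the classical fact that a primitive representation by a form of discriminant $D$ forces $D$ to be a square modulo the represented prime. I would prove it directly by cases.
\begin{itemize}
\item If $r\nmid c$, use the identity $c\,f(x,y)=(cx-by)^2-p\,y^2$. This gives $p y^2\equiv (cx-by)^2 \pmod r$. Here $r\nmid y$: otherwise $r\mid cx$, so $r\mid x$, contradicting primitivity. Hence $p$ is a nonzero square mod $r$.
\item If $r\nmid a$, the same argument with the roles of $x$ and $y$ swapped works.
\item If $r\mid a$ and $r\mid c$, then $r\nmid b$ because $\gcd(a,b,c)=1$. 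In this case $p=b^2-ac\equiv b^2\pmod r$ is directly a nonzero square.
\end{itemize}
In all cases $\Jac pr=1$, and hence $J(\lkf)=\Jac qp=+1$.

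The main obstacle is this key step, specifically handling the degenerate case where $r$ divides both diagonal coefficients. It is dealt with by the primitivity of $Q$, i.e.\ $\gcd(a,b,c)=1$. Beyond that, the only care needed is arranging that $(x,y)$ is primitive and maps to a generator, and tracking the signs and powers of $2$, which the hypothesis $p\equiv 1\pmod 8$ removes.
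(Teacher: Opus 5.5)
Your proof is correct. Its arithmetic core matches the paper's: use $\Jac{-1}{p}=\Jac{2}{p}=1$ and reciprocity to reduce to $\Jac{p}{r}=1$ for each odd prime $r$ dividing the numerator of $\lkf$ on a generator.

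Where you differ is in how that numerator is produced. The paper first invokes Lemma~\ref{lem:primitive-diagonal}, a Chinese-remainder argument giving an $\operatorname{SL}(2,\Z)$-congruence after which $\gcd(a,p)=1$. Then $e_2$ itself maps to a generator with $\lkf(e_2,e_2)=a/p$, and $b^2\equiv p\pmod a$ follows directly from $\det Q=-p$, with no case analysis. You skip that lemma, because a primitive preimage of a generator exists trivially. In exchange you need the identity $c\,f(x,y)=(cx-by)^2-py^2$ and a three-way case split.

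Two small remarks. First, your third case does not need $\gcd(a,b,c)=1$: you already know $r\nmid p$ and $p\equiv b^2\pmod r$, so the "main obstacle" you flag is not one. Second, the case split can be avoided entirely by extending the primitive vector $(x,y)$ to a basis of $\Z^2$. This makes $q$ a diagonal entry of a congruent copy of $\begin{pmatrix} c&-b\\-b&a\end{pmatrix}$, which still has determinant $-p$. For its off-diagonal entry $s$ you then get $s^2\equiv p\pmod q$, exactly as in the paper. That hybrid keeps the paper's one-line congruence while dropping its normalisation lemma.
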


For the proof of Proposition~\ref{prop:indefinite}, we need a simple algebraic lemma.

\begin{lemma}\label{lem:primitive-diagonal}
Let
\[
 Q=\begin{pmatrix}a&b\\b&c\end{pmatrix}
\]
be an integral symmetric matrix with $\coker(Q)\cong\Z/p\Z$, where $p$ is odd.  After an integral congruence $Q\mapsto P^TQP$ with $P\in\operatorname{SL}(2,\Z)$, one may assume that $\gcd(a,p)=1$.
\end{lemma}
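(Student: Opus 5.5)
The plan is to reduce the statement to a fact about values of the binary quadratic form $f(x,y)=ax^2+2bxy+cy^2$ represented by $Q$. For $P\in\operatorname{SL}(2,\Z)$ whose first column is $(x,y)^T$, the top-left entry of $P^TQP$ equals $f(x,y)$. Every primitive vector $(x,y)$ (one with $\gcd(x,y)=1$) occurs as the first column of some matrix in $\operatorname{SL}(2,\Z)$. It therefore suffices to find a primitive $(x,y)$ with $\gcd(f(x,y),p)=1$.

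First I will use the cyclicity of the cokernel. Since $\coker Q\cong\Zp$, we have $|\det Q|=p$. Moreover, for each prime $r\mid p$, the reduction of $Q$ modulo $r$ is nonzero: otherwise $\coker Q\otimes\Z/r\Z\cong(\Z/r\Z)^2$, which contradicts cyclicity. So modulo $r$ the matrix $Q$ has rank exactly $1$ (its determinant vanishes mod $r$). A rank-one symmetric matrix over the field $\mathbb F_r$ with $r$ odd has the form $\mu\, v v^T$ with $\mu\neq0$, so $f\equiv \mu\,\ell(x,y)^2 \pmod r$ for a nonzero linear form $\ell$. Concretely, at least one of $a$, $c$, $a+2b+c$ is nonzero mod $r$. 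Indeed, if $a\equiv c\equiv 0$ then $b\not\equiv0$ (as $Q\not\equiv 0$), and then $a+2b+c\equiv 2b\not\equiv 0$ because $r$ is odd. This is the step where oddness of $p$ is used.

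Next I will handle all primes at once. For each prime $r_i\mid p$ there is a residue pair $(x_i,y_i)\in\{(1,0),(0,1),(1,1)\}$ with $f(x_i,y_i)\not\equiv0\pmod{r_i}$. By the Chinese remainder theorem I choose integers $x_0,y_0$ with $(x_0,y_0)\equiv(x_i,y_i)\pmod{r_i}$ for all $i$. Then $f(x_0,y_0)$ is coprime to $p$, but the vector $(x_0,y_0)$ need not be primitive.

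The one real obstacle is this primitivity. To fix it, I will keep $x_0$ and replace $y_0$ by $y=y_0+tp$ for a suitable $t$, so that $\gcd(x_0,y)=1$ while the residues mod $p$ are unchanged. By Dirichlet-type elementary arguments (or simply the standard lemma that $\gcd(x_0,y_0,p)=1$ implies $\gcd(x_0,y_0+tp)=1$ for some $t$), this is possible. The gcd condition holds because, at each $r_i$, the pair $(x_i,y_i)$ is nonzero.

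If $x_0=0$, I first adjust it by adding a multiple of $p$, keeping $x_0\neq0$. The standard lemma is proved by taking $t$ to be the product of the primes dividing $x_0$ but not dividing $y_0$; a quick check of the cases, according to whether a prime divides $y_0$ or $p$, finishes it.

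With $(x,y)$ primitive and $f(x,y)$ coprime to $p$, I complete $(x,y)$ to $P\in\operatorname{SL}(2,\Z)$. Then $P^TQP$ has top-left entry $f(x,y)$, which is coprime to $p$, as required.
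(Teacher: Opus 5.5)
Your proof is correct, but it takes a different route from the paper's.

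The paper restricts from the outset to the one-parameter family $v=e_1+te_2$. Such vectors are automatically primitive, and $(v,e_2)$ is already a basis of determinant $1$, so the top-left entry is the polynomial $a+2bt+ct^2$ in the single variable $t$. Cyclicity of the cokernel, together with $r$ odd (so $2b\not\equiv 0$ whenever $b\not\equiv 0$), makes this polynomial nonzero modulo each prime $r\mid p$. Since it has at most two roots and $r\ge 3$, some $t_r$ is a non-root. The Chinese remainder theorem is then applied to $t$ alone.

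You instead use the test vectors $(1,0),(0,1),(1,1)$. This gives explicit witnesses without any root counting; oddness enters only through $2b\not\equiv 0$. However, $(0,1)$ is not of the form $(1,t)$, so your Chinese-remainder output $(x_0,y_0)$ may fail to be primitive. You pay for this with the lifting lemma: if $\gcd(x_0,y_0,p)=1$ and $x_0\neq 0$, then $\gcd(x_0,y_0+tp)=1$ for some $t$. You prove it correctly by taking $t$ to be the product of the primes dividing $x_0$ but not $y_0$.

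So the paper's choice of family buys a shorter argument in which primitivity never arises. Yours makes explicit that the problem is purely local at each $r\mid p$, because any vector that is nonzero modulo every such $r$ lifts to a primitive integral vector. The rank-one description $f\equiv\mu\,\ell^2$ that you mention is never actually used.
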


\begin{proof}
It suffices to find a vector $v \in \Z^2$ of the form $v = e_1+te_2$ such that $\gcd(v^t Qv, p) = 1$. If we find such a $v$, then $(v,e_2)$ is a basis of $\Z^2$, and the upper left entry of the intersection matrix in this basis is $v^t Qv= a + 2bt + ct^2$.

Let $r$ be a prime divisor of $p$. We first observe that $a,b,c$ cannot all be divisible by $r$, so that $a + 2bt + ct^2$ is not the zero polynomial modulo $r$.

Indeed, if $r$ divided $a$, $b$, and $c$, then $Q(\mathbb{Z}^2)\subset r\mathbb{Z}^2$. Consequently, reduction modulo $r$ induces a surjection
\begin{equation*}
\coker(Q) = \mathbb{Z}^2/Q(\mathbb{Z}^2) \longrightarrow \mathbb{Z}^2/r\mathbb{Z}^2 \cong (\Zr)^2,
\end{equation*}
which is impossible, since $\coker(Q)\cong \mathbb{Z}_p$ is cyclic, but $(\Zr)^2$ is not.

Since $r$ is an odd prime and $a + 2bt + ct^2$ is a non-zero polynomial, there is an integer $t_r$ such that 
\begin{equation*}
a+2bt_r+ct_r^2\not\equiv 0\pmod r. 
\end{equation*}

Doing this for every prime divisor $r$ of $p$, the Chinese remainder theorem provides an integer $t$ satisfying $t\equiv t_r\pmod r$ for every prime $r\mid p$. Hence 
\begin{equation*}
a+2bt+ct^2\not\equiv 0\pmod r 
\end{equation*}
for every prime divisor $r$ of $p$, and therefore 
\begin{equation*}
\gcd(a+2bt+ct^2,p)=1.
\qedhere
\end{equation*}
\end{proof}

\begin{proof}[Proof of Proposition~\ref{prop:indefinite}]
By Lemma~\ref{lem:primitive-diagonal}, after an integral change of basis, we may write
\[
 Q=\begin{pmatrix}a&b\\b&c\end{pmatrix}
 \qquad\text{with}\qquad \gcd(a,p)=1.
\]

First we compute 
\begin{equation*}
    \lambda(e_2,e_2)=(0, 1) \cdot \big({-Q^{-1}}\big) \cdot \begin{pmatrix}
0 \\
1
\end{pmatrix}=\frac{a}{p}
\end{equation*}
and since $\gcd(a,p)=1$, this element has order $p$. Thus the basic vector $e_2$ represents a generator of $\coker(Q)\cong\Zp$ and therefore
\begin{equation*}
    J(\lambda)=\Jac{a}{p},
\end{equation*}
which we will now compute to be $+1$. The determinant condition of $Q$ gives us $-p=\det(Q)=ac-b^2$, so $b^2\equiv p\pmod a$.
Thus for every odd prime factor $r$ of $a$, we see that $p$ is a non-zero square modulo $r$, i.e.\ 
\begin{equation*}
    \Jac{p}{r}=1.
\end{equation*}
Because $p\equiv1\pmod4$, quadratic reciprocity implies
\begin{equation*}
\Jac{r}{p}=1.
\end{equation*}
In addition, the assumption $p\equiv1\pmod8$ implies that
\[
 \Jac{-1}{p}=1,\qquad \Jac{2}{p}=1.
\]
In summary, we can factor $a=\pm1 \cdot 2^k \cdot r_1\cdots r_n$ into its sign, its powers of $2$, and its odd prime factors $r_i$ and get
\begin{equation*}
 \Jac{a}{p}=\Jac{\pm1}{p}\cdot\Jac{2}{p}^k \cdot \Jac{r_1}{p}\cdots\Jac{r_n}{p}=1,
\end{equation*}
which proves the statement.
\end{proof}

\subsection{A first obstruction to indefinite surgery number 2}

We now use the number-theoretic background from Proposition~\ref{prop:indefinite} to give an obstruction to having indefinite surgery number 2.

\begin{theorem}\label{thm:criterion}
Let $Y$ be a homology lens space with $H_1(Y) \cong \Zp$ for some $p\equiv1\pmod8$, and
\begin{equation*}
J(Y)=-1.
\end{equation*}
Then $Y$ bounds no indefinite topological $4$-manifold $W$ with $H_1(W) = 0$ and $b_2(W) = 2$. In particular, $Y$ has no indefinite surgery presentation along a $2$-component link.
\end{theorem}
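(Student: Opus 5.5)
The plan is a proof by contradiction that reduces the topological statement to Proposition~\ref{prop:indefinite}. Suppose $Y = \partial W$, where $W$ is a compact, oriented, indefinite topological $4$-manifold with $H_1(W)=0$ and $b_2(W)=2$.

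First I would show that $H_2(W)$ is free of rank $2$. Since $H_1(W)=0$, the universal coefficient theorem gives that $H^2(W)$ is torsion-free. Lefschetz duality gives $H_2(W,\partial W)\cong H^2(W)$, so $H_2(W,\partial W)$ is free of rank $2$.

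Next I would run the long exact sequence of the pair $(W,Y)$:
\[
H_2(W)\xrightarrow{Q} H_2(W,Y)\to H_1(Y)\to H_1(W)=0.
\]
Here $H_2(W)$ modulo torsion is $\Z^2$, and the map $H_2(W)\to H_2(W,Y)\cong \mathrm{Hom}(H_2(W),\Z)$ is the adjoint of the intersection form. Torsion in $H_2(W)$ pairs trivially, so $H_1(Y)\cong\coker Q$, where $Q$ is a symmetric $2\times 2$ integral matrix presenting the intersection form on $H_2(W)/\mathrm{tors}\cong\Z^2$. Moreover, the linking form $\lambda_Y$ is presented by $-Q^{-1}$. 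This is the standard fact already recalled in the paper for 4-manifolds with $H_1=0$, and it holds topologically as well, so I would simply cite it.

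It follows that $\coker Q\cong \Zp$, so $|\det Q| = p\neq 0$ and $Q$ is nondegenerate. Since $W$ is indefinite and $b_2=2$, the form has signature $0$, one positive and one negative eigenvalue. Hence $\det Q<0$, that is, $\det Q=-p$. Proposition~\ref{prop:indefinite} then applies, because $p\equiv 1\pmod 8$, $Q$ is symmetric and integral, $\det Q=-p$, and $\coker Q\cong\Zp$. It gives $J(\lambda_Y)=+1$, which contradicts the hypothesis $J(Y)=-1$.

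For the last assertion: integral surgery along a $2$-component framed link $L$ produces the trace $W_L$, which is simply connected with $b_2=2$ and has intersection form equal to the linking matrix. An indefinite presentation means exactly that this matrix is indefinite, so the first part applies.

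The only delicate point is making the identification of $\lambda_Y$ with $-Q^{-1}$ legitimate when $H_2(W)$ might have torsion. I handle this by quotienting by torsion, which is harmless because torsion classes lie in the radical of the intersection form.
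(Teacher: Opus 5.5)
Your proposal is correct and follows the paper's proof. In both, the intersection matrix $Q$ of $W$ presents $\lambda_Y$, indefiniteness together with $|H_1(Y)|=p$ forces $\det Q=-p$, and Proposition~\ref{prop:indefinite} then contradicts $J(Y)=-1$; you just spell out the homological details the paper leaves implicit. (Your torsion caveat is harmless but unnecessary: $H_1(W)=0$ and $Y$ connected give $H_1(W,\partial W)=0$, so $H_2(W)\cong H^2(W,\partial W)$ is already free.)
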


\begin{proof}
Suppose that $Y$ bounds such a $W$ and let $Q$ be a matrix representing the intersection form of $W$. Since $H_1(Y)\cong\Zp$ and $W$ is indefinite, we have ${\det Q} = -p$.
Since $\lambda_Y$ is presented by $Q$, Proposition~\ref{prop:indefinite} yields that $J(Y) = +1$, which contradicts the assumption $J(Y)=-1$.
\end{proof}

\begin{remark}
    A different way of looking at the theorem above is the fact that such a $Y$ cannot be a surgery along an indefinite 2-component link in any integer homology 3-sphere.

    Also, note that the statement of Theorem~\ref{thm:criterion} is independent of the chosen sign convention for the linking form, thanks to the assumption $p \equiv 1 \pmod 8$.
\end{remark}

\subsection{Enumerating indefinite binary quadratic forms}\label{ss:classification}

In this section, describe how to enumerate indefinite binary quadratic forms. In the next section, we describe how enumeration can be turned into another obstruction to having an indefinite surgery description along a $2$-component link.

We introduce some notation and recall some known facts about quadratic forms, following the presentation of~\cite[Section 15.3.3]{ConwaySloane}. From now on, we will assume that $d$ is a fixed positive integer, and is \emph{not} a square.

If $Q$ is an indefinite binary quadratic form of determinant $-d < 0$, then it admits a \emph{reduced} matrix representative, that is a $2\times2$ matrix $\begin{pmatrix}a&b\\b&c\end{pmatrix}$ where $0 < b < \sqrt{d} < \min\{b+|a|, b+|c|\}$.

This allows us to list all matrices representing an indefinite binary quadratic form of fixed determinant, but in general, there will be redundancies.
We describe a procedure to produce \emph{all} reduced matrix representatives of such a quadratic form.

Start with a reduced matrix $\begin{pmatrix}a_1&b_1\\b_1&a_2\end{pmatrix}$ of determinant $-d<0$.
Define two sequences $(a_j), (b_j)$ as follows.
For each $j \ge 1$ let $b_{j+1}$ be the largest integer such that
\[
b_{j+1} \equiv -b_j \pmod{a_{j+1}}, \quad b_{j+1}^2 < d.
\]
If no such integer exist, let $b_{j+1}$ be the smallest solution of $b_{j+1} \equiv -b_j \pmod{a_{j+1}}$ in absolute value (we choose the positive solution if $2b_j \equiv 0 \pmod{a_{j+1}}$).
Finally, for $j \ge 1$, let $a_{j+2} = \frac{b_{j+1}^2-d}{a_{j+1}}$.

Then the sequence of matrices $Q_j = \begin{pmatrix}a_j&b_j\\b_j&a_{j+1}\end{pmatrix}$ is eventually periodic, and gives a \emph{cycle of reduced forms}. Two reduced matrices represent isometric quadratic forms if and only if they give rise to the same cycle of reduced forms.

We will shorten the notation by writing
\[
a_1 ^{b_1} a_2 ^{b_2} \dots a_{n}^{b_n}
\]
for the associated cycle. We will also drop signs (which can be reconstructed), as they will not matter in what follows.

The bottom line is that one can enumerate explicitly all cycles of indefinite binary quadratic forms of a given determinant, and this can even be done by hand for, say, determinants up to $1000$ in absolute value.
We implemented the algorithm in Sage.

\subsection{A second obstruction to indefinite surgery number \texorpdfstring{$2$}{2}}

In this section, we give a second obstruction to the existence of an indefinite surgery presentation along a $2$-component link.
While this is algebraic in nature and is based on the linking form, its applicability heavily relies on the explicit classification of binary quadratic forms.

\begin{theorem}\label{thm:criteria2}
Let $Y$ be a homology lens space that bounds an indefinite topological $4$-manifold $S$ with $H_1(S) = 0$ and $b_2(S) = 2$. Then
\begin{itemize}
\item[(0)] $\lambda_Y$ admits a presentation by an indefinite binary quadratic form.
\end{itemize}
Let us now suppose that $W$ is \emph{smooth}.
If $\lambda_Y$ is only presented by \emph{even} indefinite binary quadratic forms and $|H_1(Y)|$ is odd, then we have that
\begin{itemize}
\item[(1)] if $Y = \partial P$, for a spin $4$-manifold $P$, we get
    \begin{itemize}
        \item [(a)] $\sigma(P) \equiv 0 \pmod{16}$,
        \item [(b)] either $\sigma(P) \ge 0$ or $b_2^+(P) \ge 2$, and
        \item [(c)] $b_2^+(P), b_2^-(P) \ge \frac18|\sigma(P)|$;
    \end{itemize}
\item[(2)] if $Y$ is an L-space, $d_{\textup{spin}}(Y)=0$.
\end{itemize}
In particular, all statements apply whenever $Y$ admits an indefinite surgery presentation along a $2$-component link.
\end{theorem}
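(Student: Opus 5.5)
Item (0) is immediate from the setup of the section. Since $H_1(S)=0$, the intersection form $Q_S$ of $S$ is a $2\times2$ symmetric integral matrix with $\coker Q_S\cong H_1(Y)$, and $\lambda_Y$ is presented by it. Because $S$ is indefinite, $\det Q_S<0$, so $\lambda_Y$ has a presentation by an indefinite binary quadratic form. The same remark gives the final sentence of the statement: surgery on a framed $2$-component link yields the trace $W$, which is simply connected with $b_2=2$, and whose intersection form is the linking matrix.

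For (1) and (2) we assume $W=S$ is smooth and that every indefinite binary form presenting $\lambda_Y$ is even. Then $Q_W$ is even, so $w_2(W)$ vanishes on $H_2(W)$. Since $H_1(W)=0$, the group $H^2(W;\Z/2)$ is detected by evaluation on $H_2$, hence $w_2(W)=0$ and $W$ is spin. Because $|H_1(Y)|$ is odd, $Y$ is a $\Z/2$-homology sphere and has a unique spin structure; the spin structure of $W$ restricts to it, as does that of any spin $P$ with $\partial P=Y$. Moreover $\sigma(W)=0$ and $b_2^\pm(W)=1$, since $W$ is indefinite of rank $2$.

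For (1), form the closed smooth spin manifold $X=P\cup_Y(-W)$. Gluing along a rational homology sphere gives additivity of signature and of $b_2^\pm$, so
\[
\sigma(X)=\sigma(P),\qquad b_2^+(X)=b_2^+(P)+1,\qquad b_2^-(X)=b_2^-(P)+1.
\]
Rokhlin's theorem gives $\sigma(X)\equiv0\pmod{16}$, which is (a). For (c), Furuta's $10/8$-theorem for a closed spin $X$ with $\sigma(X)\neq 0$ gives $b_2(X)\ge\frac{10}{8}|\sigma(X)|+2$, equivalently $\min\{b_2^+(X),b_2^-(X)\}\ge\frac18|\sigma(X)|+1$, and this yields $b_2^\pm(P)\ge\frac18|\sigma(P)|$. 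For (b), suppose $\sigma(P)<0$ and $b_2^+(P)\le1$. Then $b_2^+(X)\le2$ and $\sigma(X)\le-16$. Donaldson's Theorems B/C, which exclude nondiagonal definite forms and forms $kE_8\oplus mH$ with $m\le2$, $k\neq0$, rule this out; equivalently one can use Furuta's refined bound $b_2^+\ge3$ in this situation. The orientation convention, i.e.\ which sign is excluded, has to be checked against how $-W$ is glued. I expect this bookkeeping of the closed-manifold constraints to be the main obstacle.

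For (2), $W$ is a spin filling of $Y$ with $b_2^+(W)=b_2^-(W)=1$ and $\sigma(W)=0$. Apply the Ozsv\'ath--Szab\'o inequality for negative semidefinite fillings after removing the $b_2^+$ contribution; more directly, use the inequalities for $d$ of spin fillings with $b_2^+\le1$ on L-spaces, such as Manolescu's $\kappa$ or Hendricks--Manolescu--Zemke type bounds. For $Y$ a $\Z/2$-homology sphere L-space these give, for both $W$ and $-W$, that $d_{\mathrm{spin}}(Y)\ge\sigma/8$ up to correction terms vanishing for $b_2^\pm=1$. This yields $d_{\mathrm{spin}}(Y)\ge0$ and $d_{\mathrm{spin}}(-Y)=-d_{\mathrm{spin}}(Y)\ge0$, so $d_{\mathrm{spin}}(Y)=0$. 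Locating the precise cited inequality, e.g.\ the one for $b_2^+=1$ spin cobordisms between L-spaces, is the step I would verify carefully.
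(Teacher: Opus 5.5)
Your arguments for (0) and (1) are correct and follow the paper: Rokhlin gives (a), Furuta gives (c), and the paper uses Donaldson's Theorem~B for (b). The orientation issue you flag in (b) is not an obstacle. Furuta's inequality bounds both sides, $\min\{b_2^+(X),b_2^-(X)\}\ge\frac18|\sigma(X)|+1$. So if $\sigma(P)<0$, then (a) gives $|\sigma(X)|\ge16$, hence $b_2^+(X)\ge3$ and $b_2^+(P)\ge2$, however $-W$ is glued. In other words, (b) already follows from (a) and (c). The Furuta route is also slightly cleaner than Donaldson's theorem: Donaldson's theorem is usually stated under a $\pi_1$ (or $2$-torsion-free $H_1$) hypothesis, which $X=P\cup(-W)$ need not satisfy since $P$ is arbitrary.

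The gap is in (2). Your skeleton is the right one: an inequality from $W$, the reverse one from $-W$, and $d_{\mathrm{spin}}(-Y)=-d_{\mathrm{spin}}(Y)$. But you never pin down an inequality that holds for a spin filling with $b_2^+=1$, and the tools you list do not supply one.
\begin{itemize}
\item The Ozsv\'ath--Szab\'o bound $c_1^2+b_2\le 4d$ needs a negative-definite filling. There is no general way to ``remove the $b_2^+$ contribution'' of $W$. In ordinary Heegaard Floer theory a cobordism with $b_2^+>0$ induces the zero map on $HF^\infty$, so no inequality for $d$ comes out at all.
\item The involutive bounds of Hendricks--Manolescu(--Zemke) are likewise stated for negative-definite spin cobordisms.
\item ``$d_{\mathrm{spin}}(Y)\ge\sigma/8$ up to correction terms vanishing for $b_2^\pm=1$'' is a placeholder, not a theorem.
\end{itemize}

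The missing ingredient is Pin(2)-equivariant. Manolescu's inequalities, in the Pin(2)-monopole form of Lin \cite[Theorem~5]{Lin}, say that a spin cobordism $Y_0\to Y_1$ with $b_1=0$, $b_2^+=1$ and $\sigma=0$ satisfies $\alpha(Y_1)\ge\beta(Y_0)$. The single positive direction shifts you from $\beta$ to $\alpha$ instead of killing the information. Apply this to $W\setminus B^4$, viewed as a cobordism from $S^3$ with $\beta(S^3)=0$, to get $\alpha(Y)\ge 0$.

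Only at this point is the L-space hypothesis used. For an L-space, $\alpha=\beta=\gamma$, and these agree with $d_{\mathrm{spin}}(Y)$ up to normalisation \cite[Proposition~20]{Lin}. Also $\alpha(-Y)=-\gamma(Y)=-\alpha(Y)$. So the same argument applied to $-W$, which is again spin with $b_2^\pm=1$, gives $\alpha(Y)\le 0$, and hence $d_{\mathrm{spin}}(Y)=0$. In your sketch it is not clear where the L-space assumption enters; in the actual proof it is exactly what converts the Pin(2) invariants into $d$.
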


Note that when $|H_1(Y)|$ is odd, $Y$ has a unique self-conjugate spin$^c$ structure, which also corresponds to the unique spin structure on $Y$.
Here $d_{\text{spin}}(Y)$ denotes the Heegaard Floer correction term of $Y$ in the spin structure.
Heegaard Floer correction terms were introduced by Ozsv\'ath and Szab\'o in~\cite{OSz-absolutelygraded}.

\begin{proof}
Point (0) is obvious, and has in fact already been used in the proof of Theorem~\ref{thm:criterion}.

Let us prove the remaining statements.
Since $\lambda_Y$ can only be presented by even quadratic forms, the intersection form of $S$ is even and hence, since $H_1(S) = 0$, $S$ is spin. Since $S$ is indefinite and $b_2(S) = 2$, the signature of $S$ vanishes.

For point (1), we consider the auxiliary closed smooth 4-manifold $X = P \cup {-S}$.
Since $Y$ has a unique spin structure, $X$ is spin.
By additivity of the signature,
\[
\sigma(X) = \sigma(P) + \sigma(-S) = \sigma(P).
\]
Since $Y$ is a rational homology sphere, moreover,
\[
b_2^{\pm}(X) = b_2^\pm(P) + b_2^\pm(S) = b_2^\pm(P) + 1.
\]
To prove point (1a), we apply Rokhlin's signature theorem to $X$ to obtain
\[
\sigma(P) = \sigma(X) \equiv 0 \pmod{16}.
\]
For point (1b), suppose that $\sigma(P)<0$ and that $b_2^+(P)\le 1$. This implies
\[
\sigma(X) = \sigma(P)<0 \, \textrm{ and }\, 1\le b_2^+(X)=b_2^+(P)+1\le2.
\]
Since $X$ is spin, we can apply Donaldson's Theorem B~\cite{Donaldson-diagonalisation} and get that the intersection form of $X$ is either $\begin{pmatrix}0&1\\1&0\end{pmatrix}$ or $\begin{pmatrix}0&1\\1&0\end{pmatrix} \oplus \begin{pmatrix}0&1\\1&0\end{pmatrix}$. In particular, the signature $\sigma(X)$ of $X$ is vanishing, a contradiction.

To prove point (1c), we apply Furuta's 10/8-theorem~\cite{Furuta-10over8} to $X$ and get
\[
b_2^+(P) + 1 = b_2^+(X) \ge \frac18|\sigma(X)|+1 = \frac18|\sigma(P)|+1.
\]
Finally, to prove point (2), we use Lin's inequalities from Pin(2)-Floer homology~\cite[Theorem~5]{Lin}.
Since $Y$ is an L-space, $\alpha(Y) = \beta(Y) = 2d_{\text{spin}}(Y)$: this follows from~\cite[Proposition~20]{Lin}. Since $p$ is odd, there is a unique spin structure on $Y$, and therefore $\alpha(Y)$ and $\beta(Y)$ are well-defined.
Note that, in particular, for $S^3$ all three invariants vanish~\cite[Section~3.8]{Manolescu}.
Since $Y = \partial S$ is the boundary of a spin 4-manifold with $b_2^+ = b_2^- = 1$, there is a spin cobordism from $S^3$ to $Y$ with $b_2^+ = b_2^- = 1$, and applying Lin's inequalities one gets
\[
2d_{\text{spin}}(Y) = \alpha(Y) \ge \beta(S^3) = 0.
\]
Since $-Y = \partial(-S)$ also has the same properties, we get
\[
-2d_{\text{spin}}(Y) = -\alpha(Y) \ge \beta(S^3) = 0.
\]
Combining the two inequalities, the result follows.
\end{proof}

\begin{remark}
    Point (1c) can be improved slightly by using the refinement of Furuta's inequality due to Hopkins, Lin, Shi, and Xu~\cite{Mahowald}.

    The 11/8-Conjecture posits that $b_2^+(X) \ge \frac3{16}|\sigma(X)|$ (see~\cite{11over8}) for any closed spin 4-manifold $X$. So, conjecturally, one has the stronger inequality
    \[
    b_2^+(P) \ge \frac3{16}|\sigma(P)|-1.
    \]

    We looked for examples of lens spaces that passed the obstruction of Theorem~\ref{thm:criterion} and those of (0), (1a), (1b), and (2) in Theorem~\ref{thm:criteria2}, but could not find any.
    More precisely, we tested all lens spaces $L(p,q)$ with $p$ odd and up to $p \le 10000$, as well as some lens spaces with continued fraction $[a_1,a_2,a_3]^-$, but could not find any.
\end{remark}

\subsection{An excursus, or: how not to use binary quadratic forms}

Proposition~\ref{prop:indefinite} gives an algebraic criterion for the (non-)representability of a given linking form on a cyclic group by an indefinite binary quadratic form, which ends up giving a surgery obstruction in Theorem~\ref{thm:criterion}.

In this section, we want to discuss the limitations of this approach when working with other residue classes modulo 8 and what happens in the definite case.
Most of the arguments are inspired by~\cite[Section~3]{Edmonds}.

Throughout the section, $p$ is a positive integer and $\lambda$ is a linking form on $\Z/p\Z$.

\begin{proposition}\label{prop:representability}
    Suppose that $p$ is odd.
    The form $\lambda$ is represented by both a definite and an indefinite binary quadratic form, unless $J(\lambda) = -1$ and $p \equiv 1 \pmod 8$.
    In this case, $\lambda$ is represented by a definite form, but not by an indefinite one.
\end{proposition}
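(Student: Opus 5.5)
The negative half of the statement is immediate. If $p\equiv 1\pmod 8$ and $J(\lambda)=-1$, then Proposition~\ref{prop:indefinite} shows that $\lambda$ has no indefinite presentation. So the work is to construct explicit binary forms. For an indefinite (resp.\ definite) presentation of $\langle \frac qp\rangle$ it suffices to find a symmetric integral matrix $Q=\begin{pmatrix}a&b\\b&c\end{pmatrix}$ with $\det Q=-p$ (resp.\ $+p$), $\gcd(a,p)=1$, and $a\equiv u^2q$ modulo $p$ up to the sign fixed by the convention. Indeed, $\gcd(a,p)=1$ forces the entries to have no common prime factor with $p$, so $\coker Q$ is cyclic of order $p$. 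The computation in the proof of Proposition~\ref{prop:indefinite} then shows that $e_2$ generates $\coker Q$ and that $\lambda(e_2,e_2)=\mp a/p$. So I need an integer $a$ in the right square class modulo $p$ such that $b^2\equiv \pm p \pmod a$ is solvable, with the sign matching $\det Q=\mp p$. Then $c=(b^2\mp p)/a$, or the analogous expression in the definite case, is integral.

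I would choose $a$ to be a prime $\ell$, or $2$ times a prime, or $\pm$ a prime, lying in a prescribed residue class modulo $p$, with additional congruence conditions modulo $8$. Dirichlet's theorem supplies such primes. The requirement $b^2\equiv -p \pmod \ell$ in the indefinite case means $\Jac{-p}{\ell}=1$ (not $\Jac{p}{\ell}=1$: $ac-b^2=-p$ gives $b^2\equiv p$, so we need $\Jac p\ell=1$). By quadratic reciprocity this becomes a condition on $\Jac{\ell}{p}$ and on $\ell \bmod 4$. Meanwhile $\ell \bmod p$ is chosen so that $\Jac{\pm\ell}{p}$ equals the desired Jacobi symbol; in fact we can choose $\ell\equiv \pm u^2 q$ exactly. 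The key steps are these. First, fix the target class $\epsilon q$ modulo $p$. Second, pick $a=\epsilon' \ell$ with $\ell$ prime in a class modulo $4p$, or $8p$, chosen by CRT so that $a\equiv$ (target) modulo $p$ and so that the local solvability condition $\Jac{\pm p}{\ell}=1$ holds. Third, build $b,c$. To make this consistent I would use reciprocity: $\Jac{p}{\ell}=\Jac{\ell}{p}(-1)^{\frac{(p-1)(\ell-1)}4}$. The sign $\epsilon'$ and the freedom in $\ell \bmod 4$ together give room to match both constraints.

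The main obstacle is the case analysis on $p\bmod 8$, which is where the freedom runs out. In the indefinite case with $p\equiv 1\pmod 8$, we have $\Jac{-1}p=\Jac2p=1$, and every admissible $a$ has $\Jac ap=1$; this is exactly Proposition~\ref{prop:indefinite}. For $p\equiv 3\pmod 4$, I would use the sign of $a$: $\Jac{-1}p=-1$ flips the Jacobi symbol while $\Jac{p}{\ell}$ is controlled via $\ell\bmod 4$. For $p\equiv 5\pmod 8$, I would use $a=2\ell$ instead, since $\Jac 2p=-1$, and need $b^2\equiv p$ modulo $2\ell$, which holds automatically modulo $2$. For definite forms ($\det=+p$, condition $b^2\equiv -p\pmod a$), $\Jac{-p}{\ell}$ reciprocity differs by $\Jac{-1}{\ell}$. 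Choosing $\ell\equiv1$ or $3 \pmod 4$ and $a=\pm\ell$ (the sign fixing positive or negative definiteness, with $c$ then of the same sign) should realise both Jacobi values for every odd $p$, including $p\equiv1\pmod 8$. I expect the bookkeeping of signs and of the linking-form sign convention to be the fiddliest part. Each case reduces to one CRT plus Dirichlet choice, and I would verify each case by computing $\Jac{\pm p}{\ell}$ via reciprocity.
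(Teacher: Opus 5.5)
Your proposal is correct and is essentially the paper's argument. The shared ingredients are: Dirichlet primes in the prescribed class modulo $p$ with a chosen residue modulo $4$, reciprocity to decide when $\pm p$ is a square modulo $a$, the sign flip for $p\equiv 3\pmod 4$, the choice $a=2\ell$ for $p\equiv 5\pmod 8$, and Proposition~\ref{prop:indefinite} for the obstruction. The only difference is cosmetic: you put $a$ on the diagonal of $Q$ itself, so $\det Q=\epsilon p$ requires $-\epsilon p$ to be a square modulo $a$, which is exactly the condition the paper obtains from Edmonds' matrix $\left(\begin{smallmatrix}-cp&bp\\bp&-a\end{smallmatrix}\right)$, whose inverse carries $a/p$.
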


As we will see in the proof of Proposition~\ref{prop:even}, the assumption that $p$ is odd is essential for the indefinite case.

\begin{proof}
    Let $\lambda = \langle \frac qp\rangle$.
    Following the proof of~\cite[Theorem~3.2]{Edmonds}, the $(2\times 2)$-matrix
    \[
    Q =
    \begin{pmatrix}
        -cp & bp \\
        bp & -a
    \end{pmatrix},
    \]
    whose inverse is
    \[
    \begin{pmatrix}
        \frac a p & b \\
        b & c
    \end{pmatrix},
    \]
    presents the linking form $\langle \frac qp\rangle$ if
    \[
    a \equiv q \pmod p \quad \text{and} \quad |{\det Q}| = p.
    \]
    Moreover, $Q$ is definite if $\det Q = p$ and indefinite if $\det Q = -p$.

    Note that the statement is symmetric if we change sign to $q$: if $Q$ represents $\langle \frac qp \rangle$, then $-Q$ represents $\langle -\frac qp \rangle$, and $Q$ is definite (respectively, indefinite) if and only if $-Q$ is.

    To prove the statement, for each $\epsilon = \pm 1$ we need to find $a$, $b$, and $c$ satisfying
    \begin{equation}\label{eq:edmonds}
    \left\{
    \begin{array}{l}
    a \equiv q \pmod p, \\
    ac-pb^2 = \epsilon.
    \end{array}
    \right.
    \end{equation}

    When $a$ is fixed, the second equation in~\eqref{eq:edmonds} has a solution in $b$ and $c$ if and only if $-pb^2 \equiv \epsilon \pmod a$, or equivalently if $-\epsilon p$ is a square residue modulo $a$.

    Fix $\sigma \in \{-1,1\}$. Since $p$ is odd, by Dirichlet's theorem on primes in arithmetic progressions, there exists a prime $a$ satisfying the congruence in~\eqref{eq:edmonds} and such that $a \equiv \sigma \pmod 4$.

    Let also $\eta \in \{-1,1\}$ be defined by $p \equiv \eta \pmod 4$.
    Then $-\epsilon p$ is a square modulo $a$ if and only if $\Jac{-\epsilon p}a = 1$. By multiplicativity of the Jacobi symbol and quadratic reciprocity, we get
    \[
    \Jac{-\epsilon p}a = \Jac{-\epsilon}a \Jac p a = (-1)^{\frac{p-1}2\cdot \frac{a-1}2}\Jac{-\epsilon}a \Jac q p = \eta^{\frac{\sigma-1}2} \Jac{-\epsilon}a \Jac q p
    \]
    so that
    \[
    \Jac{-\epsilon p}a = 1
    \quad \text{if and only if}\quad 
    \eta^{\frac{\sigma-1}2}\Jac{-\epsilon}a = \Jac q p .
    \]

    The values of $\eta^{\frac{\sigma-1}2}\Jac{-\epsilon}a$ are given in the following table:
    \[
    \begin{array}{c|c|c}
                &   \epsilon = 1    & \epsilon = -1\\
    \hline
    \eta = 1  &   -\sigma           & 1\\
    \hline
    \eta = -1 &   1                 & \sigma
    \end{array}
    \]
    This means that we have a solution of~\eqref{eq:edmonds} with $a$ prime and odd unless $(J(\lambda), \epsilon, \eta) = (-1,-1,1)$ or $(J(\lambda),\epsilon,\eta) = (-1,1,-1)$.

    In the latter case, it suffices to change the sign of $\lambda$, which flips the sign of $J(\lambda)$ since $p \equiv 3 \pmod 4$.

    Let us look at the former case: $(J(\lambda), \epsilon, \eta) = (-1,-1,1)$.
    By Proposition~\ref{prop:indefinite}, we know that if $p \equiv 1 \pmod 8$ then there are indeed no indefinite forms representing $\lambda$.
    This leaves out the case $p \equiv 5 \pmod 8$, for which we \emph{do} find solutions of~\eqref{eq:edmonds}.
    Instead of looking for solutions with $a$ prime, we now look for solutions with $a = 2r$, and $r$ a prime. (Primes $r$ such that $2r \equiv q \pmod p$ exist by Dirichlet's theorem, again.)
    The strategy is the same: a solution to~\eqref{eq:edmonds} is now equivalent to having a solution to the congruences
    \[
    \begin{cases}
        -pb^2 &\equiv -1 \pmod r,\\
        pb^2 &\equiv -1 \pmod 2.
    \end{cases}
    \]
    The second congruence is equivalent to asking that $b$ be odd.
    The first congruence is equivalent to $\Jac{p}{r} = 1$.
    But now:
    \[
    \Jac{p}{r} = \Jac{r}{p} = \Jac{2}{p}\Jac{a}{p} = -J(\lambda) = 1,
    \]
    since $p \equiv 5 \pmod 8$.
\end{proof}

The proposition we just proved says that, if $p \not\equiv 1 \pmod 8$, then $\lambda$ on $\Z/p\Z$ is presented by an indefinite binary form. (Recall that $p$ is assumed to be odd and positive.)

We now turn to the question of whether there exist \emph{even} indefinite binary quadratic forms representing $\lambda$.

\begin{proposition}\label{prop:even}
    When $p$ is even or $p \equiv 3 \pmod 4$, there is no even indefinite binary quadratic form representing $\lambda$.

    When $p \equiv 1 \pmod 8$, there is an even indefinite binary quadratic form representing $\lambda$ if and only if $J(\lambda) = 1$.

    When $p \equiv 5 \pmod 8$, there is an even indefinite binary quadratic form representing $\lambda$.
\end{proposition}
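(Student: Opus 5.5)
The plan is to treat the three cases separately. Every even symmetric $(2\times2)$-matrix has the form $Q=\begin{pmatrix}2\alpha&b\\b&2\gamma\end{pmatrix}$. In the indefinite case its determinant is $4\alpha\gamma-b^2=-p$. So the whole argument is arithmetic of the identity
\[
b^2-4\alpha\gamma=p,
\]
combined with the cyclicity argument from Lemma~\ref{lem:primitive-diagonal} and the existence constructions from the proof of Proposition~\ref{prop:representability}.

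\emph{Case $p$ even or $p\equiv 3\pmod 4$.} If $p$ is even, the identity forces $b$ to be even. Then all entries of $Q$ are even, so $Q(\Z^2)\subset 2\Z^2$. Reduction mod $2$ then surjects $\coker Q$ onto $(\Z/2\Z)^2$, contradicting cyclicity exactly as in the proof of Lemma~\ref{lem:primitive-diagonal}. If $p$ is odd, then $b$ is odd, so $b^2\equiv 1\pmod 8$ and hence $p\equiv 1\pmod 4$. This rules out $p\equiv 3\pmod 4$.

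\emph{Case $p\equiv 1\pmod 8$.} Necessity of $J(\lambda)=1$ is immediate from Proposition~\ref{prop:indefinite}, which holds for every indefinite presentation, even or not. For sufficiency, I would rerun Edmonds' construction from the proof of Proposition~\ref{prop:representability}, namely
\[
Q=\begin{pmatrix}-cp&bp\\bp&-a\end{pmatrix},\qquad a\equiv q \pmod p,\qquad ac-pb^2=-1,
\]
now requiring $a$ and $c$ to be even. I would take $a=2^k r$ with $r$ prime, which Dirichlet's theorem allows, and $b$ odd. Then $pb^2-1\equiv 0\pmod 8$, so $c$ can be chosen even as soon as $k\le 2$. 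The remaining condition is $\Jac pr=1$. By reciprocity this equals $\Jac rp=\Jac 2p^{-k}J(\lambda)=J(\lambda)=1$.

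\emph{Case $p\equiv 5\pmod 8$.} Here $b^2-p\equiv 4\pmod 8$, so $\alpha\gamma$ must be odd. I would again use Edmonds' matrix with $a=2r$, $r$ prime, $a\equiv q\pmod p$, and $b$ odd. Then $ac=pb^2-1\equiv 4\pmod 8$ forces $c\equiv 2\pmod 4$, so both diagonal entries $-cp$ and $-a$ are even. Solvability again reduces to $\Jac pr=1$. By reciprocity this is $\Jac rp=\Jac 2p\Jac qp=-J(\lambda)$. This is exactly the computation at the end of the proof of Proposition~\ref{prop:representability}, and it produces an even indefinite form whenever $J(\lambda)=-1$.

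\emph{Main obstacle.} The hard step is the remaining subcase $p\equiv 5\pmod 8$ with $J(\lambda)=+1$, which the statement also asserts. My first check would be this. After Lemma~\ref{lem:primitive-diagonal}, the class $e_2$ generates $\coker Q$ with $\lambda(e_2,e_2)=2\alpha/p$. Moreover, $\alpha$ is odd and $p$ is a square modulo every odd prime factor of $\alpha$. Together these give $\Jac{\alpha}p=1$, so $J(\lambda)=\Jac 2p=-1$ for every even presentation. Unless I have mis-tracked a sign convention, this obstructs the $J(\lambda)=+1$ subcase rather than permitting it. Since $p\equiv 1\pmod 4$, replacing $\lambda$ by $-\lambda$ does not change $J$, so switching sign convention does not rescue it. I would therefore first re-examine the convention by which $Q$ presents $\lambda$, specifically the passage between $\coker Q$ and $-Q^{-1}$. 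The concrete test is to look for an explicit example with small $p\equiv 5\pmod 8$, such as $p=5$ or $p=13$, before trying to complete that subcase. This step decides whether the last assertion holds as worded.
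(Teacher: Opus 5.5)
Your arguments for the first two assertions, and for $p\equiv 5\pmod 8$ with $J(\lambda)=-1$, are correct and essentially the paper's. Cyclicity of $\coker Q$ forces $b$ odd, hence $p\equiv 1\pmod 4$. Necessity for $p\equiv1\pmod 8$ is Proposition~\ref{prop:indefinite}. Existence comes from Edmonds' matrix with $a=2r$ for a Dirichlet prime $r$, $b$ odd, and a congruence forcing $c$ to be even. Your $a=2^kr$ with $k\le 2$ is a harmless variant; strictly, $c$ is determined by $a$ and $b$, and only its parity is forced.

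What you call the main obstacle is not a gap in your proof but an error in the statement. Your computation is right, and the last assertion fails whenever $J(\lambda)=+1$.

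\begin{itemize}
\item After Lemma~\ref{lem:primitive-diagonal} (congruence preserves evenness), an even indefinite presentation with $p\equiv5\pmod 8$ has upper-left entry $2\alpha$. Here $\alpha$ is odd, coprime to $p$, and $p$ is a square modulo every prime factor of $\alpha$.
\item Reciprocity then gives $\Jac{\alpha}{p}=1$, so $J(\lambda)=\Jac{2\alpha}{p}=\Jac{2}{p}=-1$.
\item Since $J(-\lambda)=J(\lambda)$ for $p\equiv1\pmod4$, no choice of sign convention changes this.
\end{itemize}

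A concrete counterexample is $p=5$. The reduced even forms of determinant $-5$ are $\begin{pmatrix}\pm2&1\\1&\mp2\end{pmatrix}$, which present $\langle\frac25\rangle$ up to isometry. The form $\langle\frac15\rangle$ (the linking form of $L(5,1)$) is not isometric to it, because $2$ is not a square modulo $5$.

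The paper's proof breaks at exactly this point. It appeals to the end of the proof of Proposition~\ref{prop:representability} for every $p\equiv5\pmod8$. That argument gives $\Jac{p}{r}=\Jac{2}{p}J(\lambda)=-J(\lambda)$, so it only produces a solution with $a=2r$ when $J(\lambda)=-1$.

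What your argument actually establishes is the corrected statement: for odd $p$, $\lambda$ is presented by an even indefinite binary form if and only if $p\equiv1\pmod4$ and $J(\lambda)=\Jac{2}{p}$. This agrees with the examples of Section~\ref{s:examples} with $p\equiv 5\pmod 8$, all of which have $J=-1$. Nothing later in the paper depends on the false subcase.
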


\begin{proof}
    If $Q = \begin{pmatrix}a & b \\ b & c \end{pmatrix}$ represents an even, indefinite quadratic form on $\Z/p\Z$.
    Then $a$ and $c$ are even, and $b$ cannot be even, for otherwise $\coker Q$ would not be cyclic. Then
    \[
    -p = \det Q = ac-b^2 \equiv -b^2 \equiv -1 \pmod 4.
    \]
    This proves the first statement.

    Now, let us suppose that either $p \equiv 5 \pmod 8$ or $J(\lambda) = 1$ and $p \equiv 1 \pmod 8$.
    We argue as in the last part of the proof of Proposition~\ref{prop:representability}: we look for a solution of~\eqref{eq:edmonds} with $a = 2r$ and $r$ prime.
    In fact, the argument above shows that such a solution exists, but gives no control on the parity of $c$.
    However,
    \[
    -1 \equiv -p = \det Q = 2rc - b^2 \equiv 2c - 1 \pmod 4,
    \]
    which in turn implies that $c$, too, is even.
\end{proof}

\section{Definite surgeries and Donaldson's Theorem A}

We briefly recall here how to apply Donaldson's Theorem A~\cite{Donaldson-diagonalisation} to give obstructions to the existence of smooth definite fillings of a given rank.
The following proposition and its variations have been used several times in the past five decades to obstruct the existence of fillings. The specific case of lens spaces was greatly influenced by Lisca's seminal work~\cite{Lisca2007rationallens}.

\begin{proposition}\label{p:donaldson}
Let $P(p,q)$ be the minimal negative-definite plumbing whose boundary is $L(p,q)$, and let $\ell = b_2(P(p,q))$.
If $L(p,q)$ has a smooth positive-definite filling $W$ with $b_2(W) = r$.
Then there is an isometric embedding 
\[
Q_{P(p,q)}\hookrightarrow (\Z^{r + \ell},-I).
\]
\end{proposition}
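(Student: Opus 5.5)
The plan is the standard Donaldson gluing argument. Let $P = P(p,q)$ be the minimal negative-definite plumbing, so $\partial P = L(p,q)$ and $Q_P$ is the negative-definite tridiagonal matrix with diagonal entries $-a_i$ ($a_i \ge 2$) from the continued fraction $p/q = [a_1,\dots,a_\ell]^-$. Given a smooth positive-definite filling $W$ of $L(p,q)$ with $b_2(W)=r$, I first reverse its orientation. Then $-W$ is a smooth negative-definite 4-manifold with boundary $-L(p,q)$ and $b_2(-W)=r$.

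Next I form the closed, smooth, oriented 4-manifold
\[
X = P \cup_{L(p,q)} (-W),
\]
glued along the common boundary by an orientation-reversing identification. Since $L(p,q)$ is a rational homology sphere, Mayer--Vietoris gives $b_2(X) = b_2(P)+b_2(W) = \ell + r$. Novikov additivity gives $\sigma(X) = \sigma(P)+\sigma(-W) = -\ell - r$. Hence $X$ is negative definite.

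The inclusion $P \hookrightarrow X$ induces a map $H_2(P) \to H_2(X)/\mathrm{Tors}$ that preserves the intersection pairing. Because $Q_P$ is nondegenerate (its determinant is $\pm p \neq 0$), this map is injective. So $(H_2(P),Q_P)$ embeds isometrically into the lattice $(H_2(X)/\mathrm{Tors}, Q_X)$.

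Finally I apply Donaldson's Theorem A to the closed smooth negative-definite manifold $X$. This does not need $X$ simply connected: the version for arbitrary fundamental group, or for $b_1$ possibly nonzero, still says the form on $H_2/\mathrm{Tors}$ is diagonalisable. Therefore $Q_X \cong (\Z^{r+\ell}, -I)$, and composing the two maps gives the required embedding $Q_{P(p,q)} \hookrightarrow (\Z^{r+\ell},-I)$.

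The only delicate points are bookkeeping. First, the orientations must match so that the glued manifold is smooth and negative definite, which is why $W$ is reversed. Second, $b_1(X)$ may be nonzero, or $H_1(X)$ may have torsion, so one has to cite the version of Donaldson's theorem that allows this. I expect no real obstacle beyond these.
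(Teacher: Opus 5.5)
Your proposal is correct and is essentially the paper's argument: the paper glues $-P(p,q)\cup W$ into a closed positive-definite manifold, whereas you glue $P\cup(-W)$, which is the same manifold with the opposite orientation, and then apply Donaldson's Theorem A and restrict to the image of the plumbing. Your extra remarks fill in details the paper's sketch leaves implicit: injectivity follows from the nondegeneracy of $Q_P$, and since $W$ need not be simply connected, you need the general version of Donaldson's theorem.
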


Recall that the plumbing $P(p,q)$ is determined by the Hirzebruch--Jung continued fraction expansion of $p/q$:
\begin{equation*}
p/q = [a_1,\ldots,a_\ell]^- := a_1-\frac{1}{a_2-\dfrac{1}{\ddots-\dfrac1{a_\ell}}},
\end{equation*}
in the sense that $P(p,q)$ is diffeomorphic to the linear plumbing of $\ell$ disc bundles over 2-spheres, with Euler numbers $-a_1, \dots, -a_\ell$, respectively.
Here $a_1, \dots, a_\ell$ are supposed to be integers, each at least $2$, but any such expansion with $a_i$ integer gives a plumbing (which might not be definite, nor minimal).
Its intersection matrix is tridiagonal, with $-a_1, \dots, -a_\ell$ on the diagonal, and $1$s directly above and below it.

\begin{proof}[Proof sketch of Proposition~\ref{p:donaldson}]
The union $-P(p,q) \cup W$ is a positive-definite, clos\-ed, oriented, smooth 4-manifold $X$.
By Donaldson's Theorem A~\cite{Donaldson-diagonalisation}, the intersection form of $X$ is diagonalisable, and the inclusion $-P(p,q) \hookrightarrow X$ induces the required isometric embedding $H_2(-P(p,q)) \hookrightarrow H_2(X)$.
\end{proof}

\section{Applications}\label{s:examples}

We start with the proof of Theorem~\ref{thm:main}. Recall that this theorem asserts that $\sZ(L(3a^2+5a-5,3a+8)) = 3$ for each $a \equiv 7 \pmod 8$.
In fact, we will start by proving something slightly stronger.

\begin{theorem}\label{thm:nobound}
For each $a \equiv 7 \pmod 8$, $L(3a^2+5a-5, 3a+8)$ does not bound a 4-manifold $W$ with $H_1(W) = 0$ and $b_2(W) = 2$, and $s_\Z(3a^2+5a-5, 3a+8) = 3$.
\end{theorem}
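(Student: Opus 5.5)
We argue separately for the lower bound $\sZ\ge 3$ and the upper bound $\sZ\le 3$. Throughout, put $p=3a^2+5a-5$ and $q=3a+8$.

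\emph{Arithmetic.} First we check the hypotheses of Theorem~\ref{thm:criterion}. For $a\equiv 7\pmod 8$ we have $a^2\equiv 1$, hence $p\equiv 3+35-5\equiv 1\pmod 8$. Next we compute $J(L(p,q))=\Jac qp$. Since $p\equiv 1\pmod 4$, reciprocity gives $\Jac qp=\Jac pq$. The identity $(3a+8)(3a-3)=9a^2+15a-24$ shows $3p\equiv 9\pmod q$. As $q$ is odd and prime to $3$ (indeed $q\equiv 2\pmod 3$), this yields $\Jac 3q\Jac pq=1$, so $\Jac pq=\Jac 3q$. Since $q\equiv 29\equiv 5\pmod 8$, reciprocity again gives $\Jac 3q=\Jac q3=\Jac 23=-1$. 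Hence $J(L(p,q))=-1$, and Theorem~\ref{thm:criterion} rules out every indefinite $W$ with $H_1(W)=0$ and $b_2(W)=2$, even topological ones.

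\emph{Definite fillings.} Since $\det Q=\pm p\neq 0$, a filling $W$ with $b_2=2$ is either indefinite or definite, so it remains to exclude the definite case. A positive-definite $W$ is handled by Proposition~\ref{p:donaldson} applied to $L(p,q)$ with $r=2$. A negative-definite $W$ gives a positive-definite filling of $-L(p,q)=L(p,p-q)$, so we apply Proposition~\ref{p:donaldson} to $L(p,p-q)$, again with $r=2$.

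For both, we compute the Hirzebruch--Jung expansions of $p/q$ and $p/(p-q)$ as functions of $a$. Writing $a=8k+7$, these should be uniform in $k$: a few fixed small entries together with entries growing linearly in $a$. We then show that the tridiagonal lattice $Q_{P}$ of rank $\ell$ admits no embedding into $(\Z^{\ell+2},-I)$. We would follow Lisca's method: the $(-2)$-chains force consecutive vectors to share unit vectors $e_i-e_{i+1}$, and only two spare coordinates are available. Counting $\sum(a_i-3)$ against the rank, or tracking which $e_j$ can occur, should give the contradiction.

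This embedding analysis is the step I expect to be hardest, since it must be done uniformly in $a$. My first attempt would be to compute the case $a=7$ (that is, $L(177,29)$) explicitly and extract the pattern from it.

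\emph{Lower bound.} Any integral surgery description on at most two components gives a filling with $H_1=0$ and $b_2\le 2$. Indeed, $\sZ=0$ is impossible because $p>1$. A one-component description $W$ with $b_2(W)=1$ can be blown up by adding a $\pm1$-framed split unknot, giving a filling with $b_2=2$. Both cases are therefore excluded, and so $\sZ\ge 3$.

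\emph{Upper bound.} We look for an integral (not necessarily $\ge 2$) expansion $p/q'=[c_1,c_2,c_3]^-$, where $q'\equiv q^{\pm1}$ or $q'\equiv -q^{\pm1}\pmod p$. Such an expansion means
\[
p=c_1c_2c_3-c_1-c_3,\qquad q'=c_2c_3-1,
\]
with the $c_i$ linear or constant in $a$, found by solving this polynomial identity in $a$. The corresponding three-component framed chain link then gives $L(p,q)$. If no such chain exists, we would instead use a Berge--Gabai knot together with two further components, or a rational surgery on a torus knot converted to integral surgeries as in~\cite{Moser}, which yields a three-component presentation.
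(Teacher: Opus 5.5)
Your indefinite argument is correct and matches the paper: $p\equiv 1\pmod 8$, $J(L(p,q))=-1$, then Theorem~\ref{thm:criterion}. Your route via $3p\equiv 9\pmod q$ amounts to the reduction $p\equiv 3\pmod q$. The reduction of $\sZ\ge 3$ to excluding fillings with $b_2=2$ is also fine.

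\textbf{The definite case is a genuine gap.} You leave it as a plan, and the mechanism you suggest would not close it. The relevant expansions are $p/q=[a,2^{[a+1]},3,2]^-$ and $p/(p-q)=[2^{[a-2]},a+4,3]^-$. For the first, $\sum(a_i-3)=-5$ for every $a$. Yet codimension-$2$ embeddings do exist for some $a$: for $a=5$, the forced embedding described next is completed by $v_1=-e_1+2e_{11}$. So the obstruction has to come from the arithmetic of $a$.

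For the first lattice, the chain $v_2,\dots,v_{a+2}$ is forced to be $e_1-e_2,\dots,e_{a+1}-e_{a+2}$. Then $v_{a+4}=e_{a+3}-e_{a+4}$ and $v_{a+3}=e_{a+2}-e_{a+3}-e_{a+5}$. Orthogonality together with $v_1\cdot v_1=-a$ forces $v_1=-e_1+m(e_{a+5}-e_{a+3}-e_{a+4})+ne_{a+6}$, so $a=1+3m^2+n^2$. This is where the hypothesis on $a$ enters a second time: $1+3m^2+n^2\not\equiv 7\pmod 8$.

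For the second lattice, embedded in $\Z^{a+2}$, the $(-2)$-chain and the vector $w_a=e_a-e_{a+1}-e_{a+2}$ are forced. Then $w_{a-1}$ must have the form $(m-1)(e_1+\dots+e_{a-2})+me_{a-1}+(x+y-1)e_a+xe_{a+1}+ye_{a+2}$. Its square is even because $a$ is odd, contradicting $w_{a-1}\cdot w_{a-1}=-a-4$. Without these two computations the statement is not proved.

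\textbf{The upper bound is also left open.} You even keep a fallback, although it is immediate. We have $p/q=a-1+\frac{3}{3a+8}=[a-1,-a-3,-3]^-$, that is, $(c_1,c_2,c_3)=(a-1,-a-3,-3)$ in your notation. The corresponding linear chain is a $3$-component integral surgery presentation of $L(p,q)$.
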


\begin{proof}
We define $L_a := L(3a^2+5a-5,3a+8)$. There are three steps to the proof: first, we will prove that $\sZ(L_a) \le 3$; then we will prove that $L_a$ has no indefinite surgery presentation along a $2$-component link, and finally that it has no definite surgery presentation along a $2$-component link.

First, let us write the Hirzebruch--Jung continued fraction expansions of $\frac{3a^2+5a-5}{3a+8}$ and $\frac{3a^2+5a-5}{3a^2+2a-13}$:
\begin{align*}
\frac{3a^2+5a-5}{3a+8} &= [a, 2^{[a+1]}, 3, 2]^-,\\
\frac{3a^2+5a-5}{3a^2+2a-13} &= [2^{[a-2]}, a+4, 3]^-.
\end{align*}

\vskip 0,2cm

We can re-write the first as
\[
\frac{3a^2+5a-5}{3a+8} = a-1 + \frac{3}{3a+8} = a-1 - \frac1{-a-3-\frac1{-3}} = [a-1,-a-3,-3]^-,
\]
which gives a surgery presentation along a 3-component link, coming from the plumbing corresponding to the continued fraction expansion.
This shows that the integral surgery number $\sZ(L_a) \le 3$.

\vskip 0,2cm

To show that $L_a$ is not an indefinite surgery along a 2-component link, we use Theorem~\ref{thm:criterion}.
Note that, when $a \equiv 7 \pmod 8$, we have $3a^2+5a-5 \equiv 1 \pmod 8$, so the hypothesis of Theorem~\ref{thm:criterion} is indeed satisfied, and $3a+8 \equiv 1 \pmod 4$.
Using quadratic reciprocity and the other properties of the Jacobi symbols, stated in Section~\ref{ss:jacobi}, we have
\begin{align*}
J(L_a) &= \Jac{3a+8}{3a^2+5a-5} = \Jac{3a^2+5a-5}{3a+8} = \Jac{3}{3a+8}\\
&= \Jac{3a+8}3 = \Jac{-1}3 = -1.
\end{align*}

By Theorem~\ref{thm:criterion}, $L_a$ is not an indefinite surgery along a 2-component link.

\vskip 0,2cm

Finally, we prove that $L_a$ does not bound a definite 4-manifold whose second homology has rank 2. If $L(p,q)$ bounds a positive-definite $4$-manifold $W$ with
$b_2(W)=2$, Proposition~\ref{p:donaldson} implies that
\[
Q_{P(p,q)} \hookrightarrow (\mathbb Z^{b_2(P(p,q))+2},-I).
\]
On the other hand, if $L(p,q)$ bounds a negative-definite $4$-manifold
$W$ with $b_2(W)=2$, then $-W$ is a positive-definite filling of $-L(p,q)\cong L(p,p-q)$,
and Proposition~3.1 implies that
\[
Q_{P(p,p-q)} \hookrightarrow
(\mathbb Z^{b_2(P(p,p-q))+2},-I).
\]
Consequently, in order to rule out definite surgery presentations on
two-component links, it suffices to prove that neither 
\begin{align*}
    Q_1 := Q_{P({3a^2+5a-5},{3a+8})} \textrm{ nor } Q_2 := Q_{P({3a^2+5a-5},{3a^2+2a-13})}
\end{align*}
embeds in codimension 2 in a negative definite diagonal lattice.

We start with the former.
Call $v_1, \dots, v_{a+4}$ the basis vectors of $Q_1$ coming from the plumbing, so that $v_1\cdot v_1 = -a, v_{a+3}\cdot v_{a+3} = -3$, $v_j \cdot v_j = -2$ otherwise, and $v_j\cdot v_{j+1} = 1$.
Suppose that $Q_1$ embeds in $\Z^{a+6}$, with orthonormal basis $e_1, \dots, e_{a+6}$.
Since $a > 2$, up to automorphisms of $\Z^{a+6}$, the only possible embedding for the chain of $-2$s is $e_1-e_2, \dots, e_{a+1}-e_{a+2}$, and the only possible embedding for $v_{a+4}$ is $e_{a+3}-e_{a+4}$.
The vector $v_{a+3}$ is now forced to be $e_{a+2}-e_{a+3}-e_{a+5}$.

Finally, the vector $v_1$ cannot use any of the basis vectors $e_2,\dots,e_{a+2}$ (by looking at orthogonality to $v_3, \dots, v_{a+2}$ and $v_1\cdot v_1 = -a > -a-1$), and contains the vector $-e_1$. By orthogonality to $v_{a+3}$ and $v_{a+4}$, it contains a multiple of $e_{a+5}-e_{a+3}-e_{a+4}$.
Summing up, $v_1 = -e_1 + m(e_{a+5}-e_{a+3}-e_{a+4}) + ne_{a+6}$, which implies
\[
a = -v_1\cdot v_1 = 1 + 3m^2 + n^2,
\]
but $a \equiv 7 \pmod 8$ and squares modulo 8 are $0$, $1$, or $4$ and thus a direct inspection yields that the right-hand side modulo 8 is either $0$, $1$, $2$, $4$, $5$, or $6$, but never 7.

We now look at $Q_2$.
Call $w_1, \dots, w_a$ the basis vectors of $Q_2$ coming from the plumbing: $w_{a-1}\cdot w_{a-1} = -a-4$, $w_a\cdot w_a = -3$, $w_j \cdot w_j = -2$ for $j = 1, \dots, a-2$, and $w_j \cdot w_{j+1} = 1$.
Suppose that $Q_2$ embeds in $\Z^{a+2}$, with orthonormal basis $e_1, \dots, e_{a+2}$.
Since $a > 5$, up to automorphisms of $\Z^{a+2}$, the only possible embedding for the chain of $-2$s is $e_1-e_2, \dots, e_{a-2}-e_{a-1}$, and the only possible embedding for $w_a$ is $e_a - e_{a+1} - e_{a+2}$.
Let us look at $w_{a-1}$: using the intersection information with the other basis vectors, we see that $w_{a-1}$ has to be of the form
\[
(m-1)(e_1 + \dots + e_{a-2}) + me_{a-1} + (x+y-1)e_a + xe_{a+1} + ye_{a+2},
\]
but the square of this class is
\[
(a-2)(m-1)^2 + m^2 + (x+y-1)^2 + x^2 + y^2 \equiv m-1 + m + x+y-1 + x+y \equiv 0 \pmod 2,
\]
while $w_{a-1}\cdot w_{a-1} = -a-4 \equiv 1 \pmod 2$.

We have therefore shown that $L_a$ admits no integral surgery presentation along a 2-component link. This also rules out a surgery along a single knot, since any integral surgery presentation on a knot can be stabilized by adding a split $(\pm1)$-framed unknot.
\end{proof}

We now give three examples of lens spaces that have integral surgery number $3$. All three examples have $p\not\equiv 1 \pmod 8$, so Theorem~\ref{thm:criterion} does not apply. Instead, we will use the different obstructions from Theorem~\ref{thm:criteria2}.

\begin{proposition}
    $s_\Z(L(112,15)) = 3$.
\end{proposition}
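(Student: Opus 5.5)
The plan is to prove the upper bound $\sZ(L(112,15))\le 3$ by an explicit three-component surgery, and the lower bound by ruling out both indefinite and definite integral surgeries on $2$-component links. Since $p=112$ is even, neither Theorem~\ref{thm:criterion} nor parts (1)--(2) of Theorem~\ref{thm:criteria2} apply. The tool for the indefinite case will be point~(0) of Theorem~\ref{thm:criteria2}: I will show that $\langle\frac{15}{112}\rangle$ is not presented by any indefinite binary quadratic form.

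For the upper bound, I would expand $\frac{112}{15}=8-\frac{8}{15}$ and $\frac{15}{8}=2-\frac18$, so that
\[
\frac{112}{15}=[8,2,8]^-.
\]
The corresponding linear plumbing is a surgery presentation of $L(112,15)$ on a $3$-component chain of unknots. Hence $\sZ\le 3$.

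For the indefinite case, I would use the enumeration of Section~\ref{ss:classification} with $d=112$ (not a square). The steps are:
\begin{itemize}
\item[(i)] List all reduced matrices $\begin{pmatrix}a&b\\b&c\end{pmatrix}$ with $b^2-ac=112$ and $0<b<\sqrt{112}<\min\{b+|a|,b+|c|\}$.
\item[(ii)] Keep only those with cyclic cokernel, i.e.\ $\gcd(a,b,c)=1$, and group them into cycles.
\item[(iii)] For one representative of each cycle, find a generator of $\coker Q\cong\Z/112\Z$, which can be taken to be $e_2$ after Lemma~\ref{lem:primitive-diagonal}-style changes of basis, and compute its self-linking $\frac{a'}{112}$ from $-Q^{-1}$.
\end{itemize}
Two forms $\langle\frac{a'}{112}\rangle$ and $\langle\frac{15}{112}\rangle$ are isomorphic exactly when $a'\equiv 15u^2 \pmod{112}$ for some unit $u$; here the equality holds in $\Q/\Z$, so the congruence is modulo $112$. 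So the check reduces to a finite comparison against the classes of $15\cdot(\Z/112\Z)^{\times 2}$. Detecting this via the residues modulo $16$ and modulo $7$ should make it quick. I expect every cycle to give a class different from $15$, possibly with the exception of forms whose cokernel is not cyclic. This step is finite but is the main bookkeeping obstacle: one has to be sure that the list of cycles is complete, and that the sign convention $-Q^{-1}$ is applied consistently. I would double-check the list with the Sage implementation mentioned above.

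For the definite case, I would apply Proposition~\ref{p:donaldson} to both orientations. First, I would show that $Q_{P(112,15)}$, the chain $[8,2,8]$, does not embed in $(\Z^5,-I)$. The middle vector must be $e_1-e_2$. Each end vector has square $-8$ and must pair to $1$ with $e_1-e_2$, while the two end vectors must be orthogonal to each other. Enumerating the norm-$8$ vectors of $\Z^5$ ($8=4+4=4+1+1+1+1=\dots$) should give a short case analysis leading to a contradiction.

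Second, I would compute the dual expansion $\frac{112}{97}=[2^{[6]},3,2^{[6]}]^-$ (Riemenschneider duality) and show that this chain of length $13$ does not embed in $\Z^{15}$. Each chain of $-2$'s is forced to be of the form $e_i-e_{i+1}$, up to the usual alternatives. The $-3$ vector then has very few options. I expect a parity or orthogonality contradiction analogous to the one for $Q_2$ in Theorem~\ref{thm:nobound}.

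Combining the three cases shows that there is no $2$-component presentation. A single-component presentation is excluded by stabilising it with a split $(\pm1)$-framed unknot, which would produce a $2$-component presentation. Hence $\sZ(L(112,15))=3$.
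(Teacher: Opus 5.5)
Your overall route is the paper's: the $[8,2,8]^-$ plumbing for $\sZ\le 3$; Theorem~\ref{thm:criteria2}(0) with the enumeration of Section~\ref{ss:classification} for indefinite links; and Proposition~\ref{p:donaldson} applied to $P(112,15)$ and $P(112,97)$ for definite links. Your plan for $[8,2,8]$ in $\Z^5$ works. The second definite step, however, fails as written, because the dual expansion is wrong. In fact $112/97=[2^{[6]},4,2^{[6]}]^-$; your $[2^{[6]},3,2^{[6]}]^-$ equals $63/55$. A quick check is that Riemenschneider duality preserves $\sum(a_i-1)$, here $7+1+7=15$, which forces the middle entry to be $4$.

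For the lattice you wrote down there is no contradiction to find. The vectors $e_1-e_2,\dots,e_6-e_7$, $e_7-e_8+e_{15}$, $e_8-e_9,\dots,e_{13}-e_{14}$ embed it in $\Z^{15}$. With the correct lattice the obstruction is a count, not a parity or orthogonality clash:
\begin{itemize}
\item The two orthogonal $(-2)$-chains of length six are forced to be standard, say $e_1-e_2,\dots,e_6-e_7$ and $e_8-e_9,\dots,e_{13}-e_{14}$, so they use fourteen distinct basis vectors.
\item Adjacency and orthogonality then force the central vector of square $-4$ to be $e_7-e_8$ plus a vector of square $-2$ supported off those fourteen coordinates.
\item Hence sixteen basis vectors are needed, and there is no embedding in $\Z^{15}$. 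This is exactly the paper's argument.
\end{itemize}

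In the indefinite case your plan matches the paper, but you leave the decisive computation as an expectation. Carried out, the forms of determinant $-112$ with cyclic cokernel make up the two cycles $9^2 12^{10} 1^{10} 12^2 9^7 7^7$ and $3^8 16^8 3^{10} 4^{10}$. These present only $\langle\pm\frac1{112}\rangle$ and $\langle\pm\frac3{112}\rangle$. Your mod $16$ / mod $7$ test then closes the argument:
\begin{itemize}
\item For $15\equiv\pm k^2$: since $15\equiv -1\pmod{16}$ and odd squares are $1,9$ modulo $16$, the sign must be $-$. But $15\equiv 1\pmod 7$ and $-1$ is not a square modulo $7$, so the sign must be $+$.
\item For $15\equiv\pm 3k^2$: the values of $\pm 3k^2$ modulo $16$ lie in $\{3,5,11,13\}$, which excludes $15$.
\end{itemize}
A minor point: Lemma~\ref{lem:primitive-diagonal} is stated only for odd $p$. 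Here it is harmless, because no form of determinant $-112$ with cyclic cokernel is even (Proposition~\ref{prop:even}). So a CRT choice of $t$ modulo $2$ and $7$ still works, or you can simply locate a generator of the cokernel directly.
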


\begin{proof}
The continued fraction expansion of $112/15$ is $[8,2,8]^-$, so the canonical negative plumbing for $L(112,15)$ gives a surgery presentation along a 3-component link. This proves that $s_\Z(L(112,15)) \le 3$.

\vskip 0,2 cm

Indefinite binary quadratic forms of determinant $-112$ with cyclic cokernel are classified as explained in Section~\ref{ss:classification}.
The resulting cycles are
\[
9 ^2 12 ^{10} 1 ^{10} 12 ^2 9 ^7 7 ^7, \quad 3 ^8 16 ^8 3 ^{10} 4 ^{10},
\]
and the linking form they present are $\langle\pm \frac1{112}\rangle, \langle\pm \frac3{112}\rangle$. We claim that $\langle\frac{15}{112}\rangle$ is not isometric to any of these four.
To see this, we need to show that the congruences
\[
15 = \pm k^2 \pmod{112}, \qquad 15 = \pm 3 k^2 \pmod{112}
\]
have no solution.

We start with the second congruence: reducing modulo $8$, we have $15 \equiv \pm 3k^2 \pmod 8$, which is impossible since the right-hand side is $0$, $3$, $4$, or $5$ modulo $8$.

Let us look at the first congruence.
By the Chinese remainder theorem, we need to show that for $\epsilon = -1,+1$ there is no solution to the system:
\[
\left\{
\begin{array}{l}
    15 \equiv \epsilon k^2 \pmod 8\\
    15 \equiv \epsilon k^2 \pmod 7.
\end{array}
\right.
\]
The first congruence has a solution if and only if $\epsilon = -1$, while the second has a solution if and only if $\epsilon = +1$.

Hence, by Theorem~\ref{thm:criteria2}(0), $L(112,15)$ does not have an indefinite surgery presentation along a 2-component link.

\vskip 0,2cm

To show that $L(112,15)$ has no definite surgery presentation along a $2$-component link either, we again use Proposition~\ref{p:donaldson}.
As mentioned above, $112/15 = [8,2,8]^-$.
We also have $112/97 = [2^{[6]},4,2^{[6]}]^-$.

We will argue by contradiction that neither of the intersection forms of the two plumbings has a codimension-2 embedding in a diagonal lattice.

For the first, we are looking at an embedding in $\Z^5$, with orthonormal basis $e_1, \dots, e_5$.
Call $v$ and $w$ the two vectors of square $8$.
Since they both intersect the vectors of square $2$ once, they are primitive.
The only ways to write $8$ as a square of at most five integers is $8 = 4 + 4 = 4 + 1 + 1 + 1 + 1$.
The first is not allowed, since the corresponding vector would be primitive.
In the second case, there are two possibilities: either $v$ and $w$ have the coefficient $2$ in front of two different $e_i$s, or they have it in front of the same.
The former case is impossible, since then
\[
0 = v\cdot w \equiv 2 + 2 + 1 + 1 + 1 \equiv 1 \pmod 2.
\]
In the second case, up to automorphisms of $\Z^5$ we can write:
\[
v = 2e_1 + e_2 + \dots + e_5, \quad w = \pm 2e_1 \mp (e_2 + \dots + e_5).
\]
(The signs for $w$ are opposite because $v \cdot w = 0$.)
But then the vector of square 2 has to be of the form $\pm e_1 \pm e_2$, which is easily seen to be incompatible with the constraints.

For the other embedding, we actually see that the corresponding lattice has only one embedding (up to automorphism) in $\Z^n$ for $n \ge 16$, and that embedding uses all sixteen basis vectors.
Indeed, the two chains of $2$s are forced, and use fourteen distinct basis vectors, and the vector of square $4$ necessarily uses a vector from the left chain, one from the right chain, and two extra vectors.
\end{proof}

\begin{proposition}
    $s_\Z(L(213,22)) = 3$.
\end{proposition}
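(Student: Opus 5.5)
The plan follows the same three-step structure as for $L(112,15)$: an upper bound from a short continued fraction, an indefinite obstruction via Theorem~\ref{thm:criteria2}, and a definite obstruction via Proposition~\ref{p:donaldson}. Here $p=213=3\cdot 71\equiv 5\pmod 8$, so Theorem~\ref{thm:criterion} is unavailable. By Proposition~\ref{prop:even}, even indefinite presentations do exist, so criterion (0) alone cannot work either, and the Floer-theoretic part of Theorem~\ref{thm:criteria2} will be needed.

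\emph{Upper bound.} We have $213/22=[10,4,2^{[6]}]^-$. Writing $213/22 = 10-\frac{7}{22}$ and $22/7 = 3+\frac17 = 3-\frac1{-7}$ gives $213/22=[10,3,-7]^-$. The associated linear plumbing is a $3$-component surgery presentation, so $\sZ(L(213,22))\le 3$.

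\emph{Indefinite case.} First, I would enumerate, with the reduction algorithm of Section~\ref{ss:classification}, all cycles of reduced indefinite forms of determinant $-213$ with cyclic cokernel. For each cycle I would read off the presented linking form $\langle \pm\frac{x}{213}\rangle$; this reduces to a check of whether $22\equiv \pm x k^2 \pmod{213}$, done separately modulo $3$ and modulo $71$ using Legendre symbols. The aim is to show that every form presenting $\langle\frac{22}{213}\rangle$ is even. Odd forms presenting it should be excluded by exactly this square-class bookkeeping, in the same way as in the $L(112,15)$ example.

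Once this is established, Theorem~\ref{thm:criteria2}(2) applies, since lens spaces are L-spaces. It then suffices to compute $d_{\textup{spin}}(L(213,22))$ with the Ozsv\'ath--Szab\'o recursive formula for $d(L(p,q),i)$ at the spin index and check that it is nonzero. If it unexpectedly vanishes, the fallback is (1). There, $P$ would be a spin filling obtained from an all-even-framed plumbing presentation, found by rewriting the continued fraction with even entries, and one would test $\sigma(P)\bmod 16$ or the $10/8$ inequality. The main obstacle is this step: the enumeration of forms and the spin correction term computation, and in particular confirming that no odd indefinite form presents $\langle \frac{22}{213}\rangle$.

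\emph{Definite case.} By Proposition~\ref{p:donaldson} applied to $L(213,22)$ and to $-L(213,22)=L(213,191)$, it suffices to show two non-embeddings. The first is that the lattice of $[10,4,2^{[6]}]^-$ does not embed in $\Z^{10}$. The second is that the lattice of the dual expansion $213/191=[2^{[8]},3,2^{[?]}\dots]^-$, obtained via Riemenschneider's point diagram, does not embed in codimension $2$.

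For the first lattice, the chain of six $-2$'s is forced to be $e_1-e_2,\dots,e_6-e_7$. The square-$4$ vector must then be $-e_7$ plus three further unit vectors among $e_8,e_9,e_{10}$, or something similar, and the square-$10$ vector is determined up to a few integer parameters by the orthogonality constraints. A parity or mod-$8$ sum-of-squares argument, as in the proof of Theorem~\ref{thm:nobound}, should then give a contradiction. For the dual lattice, the long chains of $-2$'s are again rigid and consume almost all basis vectors, leaving too few free coordinates for the remaining vectors; this is the same counting that excluded the second plumbing for $L(112,15)$.
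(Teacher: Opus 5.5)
Your proposal is correct and has the same architecture as the paper's proof. Both use the upper bound from $[10,3,-7]^-$. Both enumerate the forms of determinant $-213$ and use a square-class check to show that only even indefinite forms can present $\langle\frac{22}{213}\rangle$. Both use Proposition~\ref{p:donaldson} for the definite case, which the paper leaves to the reader.

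The one real difference is how you close the indefinite case. The paper uses Theorem~\ref{thm:criteria2}(1a). Since $[10,4,2^{[6]}]^-$ already has only even entries, the canonical negative-definite plumbing is itself a spin filling of $L(213,22)$ with $\sigma=-8\not\equiv 0\pmod{16}$, so Rokhlin's theorem finishes the argument. In particular, your fallback needs no rewriting of the continued fraction.

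Your primary route via (2) also works, and you can skip the recursive formula. Apply the Ozsv\'ath--Szab\'o inequality $c_1(\mathfrak t)^2+b_2\le 4d$ to that same plumbing with its spin structure ($c_1=0$); this gives $d_{\textup{spin}}(L(213,22))\ge 2\neq 0$. So both routes rest on the same observation about the plumbing, but (1a) needs only Rokhlin, whereas (2) needs Lin's Pin(2) inequalities.

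Two remarks on carrying out your plan:

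\emph{Square classes.} Splitting modulo $3$ and $71$, as you propose, is genuinely necessary. The case $22\equiv k^2$ fails modulo $71$ because $\Jac{22}{71}=-1$. However, $-22\equiv 7^2\pmod{71}$, so $22\equiv -k^2$ can only be excluded modulo $3$, where $22\equiv 1$ while $-k^2\in\{0,2\}$. The paper's text attributes both exclusions to $71$, which is inaccurate, although its conclusion is right.

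\emph{Definite case.} The dual expansion is $213/191=[2^{[8]},3,2,8]^-$, and both of your sketches go through. In $\Z^{10}$, the $-2$-chain and the square-$4$ vector $u$ are forced. The square-$10$ vector $v$ then satisfies $v\cdot u\equiv v\cdot v\pmod 2$, which contradicts $v\cdot u=\pm1$. In $\Z^{13}$, the chain of eight $-2$'s, the square-$3$ vector and the next square-$2$ vector are forced onto twelve coordinates. No vector of square $8$ then satisfies the remaining orthogonality and adjacency conditions.
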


\begin{proof}
We have $213/22 = [10, 4, 2^{[6]}]^- = [10,3,-7]^-$, so the associated plumbing gives a surgery presentation of $L(213,22)$ along a 3-component link, and this shows that $s_\Z \le 3$.

\vskip 0,2cm

We start by showing that there is no indefinite presentation along a 2-component link.
As in the previous proof, we classify all indefinite binary quadratic forms of determinant $-213$:
\[
12 ^3 17 ^{14} 1 ^{14} 17 ^3 12 ^9 11 ^{13} 4 ^{11} 23 ^{12} 3 ^{12} 23 ^{11} 4 ^{13} 11 ^9, \quad
6 ^9 22 ^{13} 2 ^{13} 22 ^9.
\]
Note that $213 = 3\cdot 71$, so the condition on the cokernel is automatically satisfied.

The first one represents the linking forms $\langle\pm \frac1{213}\rangle$, but $22 \neq \pm k^2 \pmod {213}$, because
\[
\Jac{22}{71} = \Jac{2}{71}\Jac{11}{71} = \Jac{71}{11} = \Jac{5}{11} = \Jac{3}{5} = -1,
\]
and as $71 \equiv 3 \pmod 4$, $22$ is neither a square, nor $-1$ times a square modulo 71 (and hence modulo 213).

So we have only even intersection forms (which actually present the right linking form, but we don't need to check it).

Now, $213/22 = [10, 4, 2^{[6]}]^-$, so the corresponding plumbing is definite and has signature $-8$. This contradicts Theorem~\ref{thm:criteria2}(1a).

\vskip 0,2cm

The arguments for showing that there are no definite surgery presentations along a 2-component link make use of Proposition~\ref{p:donaldson} and follow the same line of ideas as above.
We leave the details to the reader.
\end{proof}

\begin{proposition}
    $s_\Z(L(197,13)) = 3$.
\end{proposition}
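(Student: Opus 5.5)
The plan is to follow the template of the two previous propositions: an explicit three-component presentation for the upper bound, Theorem~\ref{thm:criteria2} against indefinite $2$-component presentations, and Proposition~\ref{p:donaldson} against definite ones. Since $197 \equiv 5 \pmod 8$, Theorem~\ref{thm:criterion} does not apply.

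\emph{Upper bound.} First I will compute $197/13 = [16,2,2,2,2,2,3]^-$. Then I will compress it as
\[
\frac{197}{13} = 15 + \frac{2}{13} = 15 - \frac{1}{-7-\frac{1}{-2}} = [15,-7,-2]^-.
\]
The associated linear plumbing is a framed $3$-component link, so $s_\Z(L(197,13)) \le 3$.

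\emph{Indefinite presentations.} The number $197$ is prime, so the cokernel condition is automatic. I will run the reduction algorithm of Section~\ref{ss:classification} to list all cycles of reduced indefinite forms of determinant $-197$, and sort them into odd and even.

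For each odd cycle I will read off the linking form it presents, by taking a diagonal entry $a$ coprime to $197$, exactly as in the proof of Proposition~\ref{prop:indefinite}. I expect each odd class to present $\langle \pm \frac{1}{197}\rangle$ or some other form that is visibly distinct from ours. To exclude $\langle\pm\frac{1}{197}\rangle$ I will use Jacobi symbols:
\[
\Jac{13}{197} = \Jac{197}{13} = \Jac{2}{13} = -1,
\]
and $\Jac{-1}{197} = 1$. Hence $13$ is neither $k^2$ nor $-k^2$ modulo $197$. Consequently $\lambda_{L(197,13)}$ could only be presented by even indefinite forms, and Proposition~\ref{prop:even} confirms that such forms exist when $p \equiv 5 \pmod 8$.

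We are then in the setting of Theorem~\ref{thm:criteria2}(1) and (2). Since lens spaces are L-spaces, the cleanest route is (2): I will compute $d_{\mathrm{spin}}(L(197,13))$ and check that it is nonzero. For this I will use the Ozsváth--Szabó recursion
\[
d(-L(p,q),i) = \frac{pq-(2i+1-p-q)^2}{4pq} - d(-L(q,r),j),
\]
where $r \equiv p \pmod q$ and $j \equiv i \pmod q$, evaluated at the self-conjugate label. As a backup, I will look for an even, possibly non-definite, continued fraction expansion of $197/13$. This would give a spin plumbing $P$ with $\sigma(P) \not\equiv 0 \pmod{16}$, or one violating (1b) or (1c), which suffices by part (1).

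\emph{Definite presentations.} By Proposition~\ref{p:donaldson}, it suffices to show two things:
\begin{itemize}
\item the lattice of $[16,2^{[5]},3]^-$ admits no embedding in $(\Z^{9},-I)$;
\item the lattice of the dual expansion of $197/184$ admits no codimension-$2$ embedding.
\end{itemize}
For the first, the chain of $-2$s is forced up to automorphism, and so is the vector of square $-3$ attached to it. The remaining constraint is the square-$16$ vector, orthogonal to most of the chain. This should reduce to a congruence obstruction of the form $16 = 1 + (\text{sum of a few squares with forced structure})$, similar to the $\pmod 8$ argument used for $Q_1$ in Theorem~\ref{thm:nobound}. The dual expansion is $197/184 = [2^{[14]},3,2,2,2,2]^-$ or similar; it consists of long $-2$ chains and should be handled by a counting or parity argument as for $Q_2$.

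I expect the main obstacle to be the indefinite step. The danger is that some odd indefinite form of determinant $-197$ presents a form $\langle \frac{c}{197}\rangle$ with $\Jac{c}{197} = -1$, which the Jacobi symbol alone cannot distinguish from ours. In that case one must check explicitly that $13 \not\equiv \pm c k^2 \pmod{197}$. The second delicate point is verifying $d_{\mathrm{spin}} \neq 0$, which requires correctly identifying the spin label in the recursion.
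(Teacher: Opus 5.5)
Your plan is correct. It agrees with the paper on three of the four steps:
\begin{itemize}
\item the upper bound (your $[15,-7,-2]^-$ is the mirror of the paper's $[-15,7,2]^-$);
\item discarding the odd forms via $\Jac{13}{197}=-1$;
\item using Proposition~\ref{p:donaldson} for definite fillings, which the paper leaves to the reader.
\end{itemize}

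The genuine difference is which part of Theorem~\ref{thm:criteria2} rules out the even forms. The paper uses (1b). The dual expansion $197/184=[2^{[14]},8,2]^-$ has only even entries, so $-L(197,13)=L(197,184)$ bounds the negative-definite spin plumbing $P(197,184)$ with $\sigma=-16$ and $b_2^+=0$. This contradicts (1b) for $-Y$, which inherits an indefinite presentation from $Y$. That route costs nothing beyond the continued fraction you need anyway, and it only uses Donaldson's Theorem~B.

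Your route (2) also works. The self-conjugate label is $i=(q-1)/2=6$, and the recursion gives
\[
d(-L(197,13),6)=-\tfrac{46}{13}-d(-L(13,2),6)=-\tfrac{46}{13}-\tfrac{6}{13}=-4.
\]
So $d_{\mathrm{spin}}(L(197,13))=-4\neq 0$. This is consistent with $d_{\mathrm{spin}}(L(197,184))=b_2/4=4$, which the sharp even plumbing $P(197,184)$ forces. Route (2) needs Lin's Pin(2) inequalities and an explicit computation. In exchange, it still applies when no even definite plumbing is available, as for $L(397,39)$.

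Your backup is essentially the paper's argument, with one caveat. The rational $197/13$ itself has no all-even expansion, because an all-even $[a_1,\dots,a_k]^-$ with odd numerator has even denominator. You must use $-197/184=[(-2)^{[14]},-8,-2]^-$, i.e.\ the plumbing $-P(197,184)$. It bounds $L(197,13)$ and has $b_2^-=0<2=|\sigma|/8$, so it violates (1c) directly.

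Two details need fixing.

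First, the dual expansion is $[2^{[14]},8,2]^-$ (rank $16$), not $[2^{[14]},3,2,2,2,2]^-$. Its non-embedding in $\Z^{18}$ is quick:
\begin{itemize}
\item the $14$-chain and the last square-$2$ vector are forced;
\item the square-$8$ vector must vanish on $e_1,\dots,e_{14}$ and be $\pm1$ on $e_{15}$;
\item one is then left writing $7$ as a sum of three squares, which is impossible.
\end{itemize}
For $[16,2^{[5]},3]^-$ in $\Z^9$ you get $(m\pm1)^2+5m^2+y_7^2+y_8^2+y_9^2=16$ with $y_7+y_8=m$. The case $m=0$ is your mod-$8$ obstruction, but $m=\pm1$ needs a short separate check.

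Second, the ``danger'' you flag in the indefinite step could not be handled as you propose. Since $197$ is prime, $\langle\frac{c}{197}\rangle\cong\langle\frac{13}{197}\rangle$ as soon as $\Jac{c}{197}=-1$, so no check of $13\not\equiv\pm ck^2$ could succeed. Fortunately the danger never arises. An odd form of determinant $-p$ with cyclic cokernel has, after a change of basis, an odd diagonal entry coprime to $p$: run the CRT argument of Lemma~\ref{lem:primitive-diagonal} with an extra parity condition on $t$. The proof of Proposition~\ref{prop:indefinite} then gives $J=+1$ using only $p\equiv 1\pmod 4$. So for $p\equiv 5\pmod 8$ and $J(Y)=-1$, only even forms can appear, and the enumeration is not even needed.
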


\begin{proof}
We compute the continued fraction expansion as
\[
197/(197-13) = [2^{[14]}, 8, 2]^- = [-15,7,2]^-,
\]
so the corresponding plumbing gives $s_\Z \le 3$.

\vskip 0,2cm

We now show that there are no indefinite surgery presentations along 2-component links.
First, we classify the indefinite binary quadratic forms of determinant $-197$ (which is a prime):
\[
14 ^1 14 ^{13} 2 ^{13}, \quad 7 ^8 19 ^ {11} 4 ^{13}, \quad 19 ^8 7 ^{13} 4 ^{11}, \quad 1 ^{14}.
\]
Since $197 \equiv 5 \pmod 8$, $\Jac{-1}{197} = 1$. So the three odd intersection forms represent $\pm1$ modulo $197$, but
\[
\Jac{13}{197} = \Jac{5}{13} = \Jac{3}{5} = -1,
\]
so the only forms that can appear are even. (That the last form represents $\pm1$ is obvious. That the other two do, follows from the fact that they both represent $\pm 4 = \pm 2^2$.)

But now we have $197/184 = [2^{[14]}, 8, 2]^-$, so the corresponding plumbing is definite and even, of rank $16$. This contradicts Theorem~\ref{thm:criteria2}(1b).

\vskip 0,2cm

The arguments for showing that there are no definite surgery presentations along a 2-component link make use of Proposition~\ref{p:donaldson} and follow the same line of ideas as above.
We leave the details to the reader.
\end{proof}

\begin{proposition}
    $s_\Z(L(397,39)) = 3$.
\end{proposition}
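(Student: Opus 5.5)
The plan follows the same three-step template as the previous examples: an upper bound $s_\Z\le 3$, then exclusion of indefinite rank-$2$ presentations via Theorem~\ref{thm:criteria2}, then exclusion of definite ones via Proposition~\ref{p:donaldson}.

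\emph{Upper bound.} The Hirzebruch--Jung expansion is $397/39=[11,2,2,2,3,4]^-$, of length $6$, and that of the orientation-reversed lens space is $397/358=[2^{[9]},7,3,2,2]^-$. I would look for a length-$3$ expansion $397/q'=[x,y,z]^-$ with arbitrary integer entries, for some $q'$ with $q'\equiv 39^{\pm1}$ or $q'\equiv -39^{\pm1}\pmod{397}$. This can be done by a short search, using the same trick as $[a-1,-a-3,-3]^-$ and $[10,3,-7]^-$ above, namely rewriting partial quotients with negative entries. The resulting linear plumbing gives a $3$-component surgery presentation.

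\emph{Indefinite case.} Here $397$ is prime and $397\equiv 5\pmod 8$, so Theorem~\ref{thm:criterion} does not apply. However,
\[
J(L(397,39))=\Jac{39}{397}=\Jac{397}{39}=\Jac{7}{39}=-\Jac{39}{7}=-\Jac{4}{7}=-1 .
\]
Since $\Jac{-1}{397}=1$, the form $\langle \frac{39}{397}\rangle$ is not isometric to $\langle\pm\frac{k^2}{397}\rangle$ for any $k$.
\begin{itemize}
\item I would enumerate the cycles of reduced indefinite forms of determinant $-397$ using the algorithm of Section~\ref{ss:classification}.
\item For every odd cycle, I would exhibit an entry that is $\pm$ a square, or a square times a residue, so that its linking form has $J=+1$. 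This is what happened for $197$, where each odd form represented $\pm1$ or $\pm4$.
\item This leaves only even forms, so Theorem~\ref{thm:criteria2}(1)--(2) apply.
\end{itemize}
Neither canonical plumbing is even, so (1) is not immediately available. I would therefore use (2): lens spaces are L-spaces, so it suffices to show $d_{\mathrm{spin}}(L(397,39))\neq 0$. I would compute this with the Ozsv\'ath--Szab\'o recursion
\[
d(L(p,q),i)=-\tfrac14+\tfrac{(2i+1-p-q)^2}{4pq}-d(L(q,r),j),
\]
evaluated at the spin index $i$. If this happens to vanish, the fallback is to find an even definite or spin $P$ by blowing down or adding a characteristic sublink, and then test (1a)--(1c).

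\emph{Definite case.} As in Theorem~\ref{thm:nobound}, I would show that neither $Q_{P(397,39)}$ (lattice $[11,2,2,2,3,4]$) embeds in $-\Z^{8}$, nor $Q_{P(397,358)}$ (lattice $[2^{[9]},7,3,2,2]$) embeds in $-\Z^{15}$.
\begin{itemize}
\item The chains of $-2$s are forced to be of the form $e_i-e_{i+1}$.
\item The remaining vectors are then pinned down by orthogonality and pairing conditions.
\item Each case should end in a norm or parity contradiction, for instance $11$ or $7$ not being expressible in the required sum-of-squares form.
\end{itemize}

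I expect the main obstacle to be the indefinite step. Verifying that every odd indefinite form fails to present $\langle\frac{39}{397}\rangle$ requires the explicit cycle list, and the $d$-invariant must turn out nonzero. If it does not, I would need a spin filling instead.
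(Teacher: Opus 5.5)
\textbf{Gap: the upper bound.} You propose to find a length-$3$ expansion $397/q'=[x,y,z]^-$ by a short search. That search will come up empty, because no such expansion exists. Here is why.
\begin{itemize}
\item If $[x,y,z]^-=N/D$, then $D=yz-1$ and $N=xD-z$. Hence $\gcd(N,D)=\gcd(z,yz-1)=1$, and the plumbing bounds $L(|N|,\pm D)$.
\item So we need $|N|=397$ and $D\equiv\pm39,\pm112\pmod{397}$ (recall $39^{-1}\equiv112$). We also need $z$ to divide $D+1$ with $z\equiv\mp397\pmod D$.
\item For $|D|\ge399$ the only candidates are $z=\pm397$ and $z=\pm(|D|-397)$. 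The first forces $D\equiv-1\pmod{397}$. The second forces $|D|-397$ to divide $398$ (if $D>0$) or $396$ (if $D<0$). Neither lands in the required residue classes.
\item The remaining values $D\in\{\pm39,\pm112,\pm285,\pm358\}$ are excluded by listing the divisors of $D+1$.
\end{itemize}
So $L(397,39)$ bounds no $3$-vertex linear plumbing, and the tricks used for the other examples do not transfer.

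The missing idea is to go beyond linear plumbings. The paper writes $397/39=[10,-5,2,4]^-$. The tail $[2,4]^-=7/4$ shows that $L(397,39)$ is integral surgery on a $2$-component link in $L(7,4)$. By Moser, $L(7,4)$ is itself a single integral surgery on a trefoil. Pushing the $2$-component link into the trefoil exterior gives a $3$-component presentation in $S^3$.

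\textbf{Other steps.} Your indefinite step is the paper's argument, and it does go through. The enumeration can even be skipped: for $p\equiv5\pmod8$, the proof of Proposition~\ref{prop:indefinite}, applied to an odd diagonal entry prime to $p$, gives $J=+1$ for every odd form. Moreover, the Ozsv\'ath--Szab\'o recursion at the spin index $i=19$ gives $d_{\mathrm{spin}}(L(397,39))=-2\neq0$, so Theorem~\ref{thm:criteria2}(2) applies.

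In the definite step your continued fraction is wrong: $[11,2,2,2,3,4]^-=327/32$, whereas $397/39=[11,2,2,2,2,3,4]^-$. So you must exclude a rank-$7$ lattice from $-\Z^9$, not a rank-$6$ lattice from $-\Z^8$. This is quick:
\begin{itemize}
\item The four $2$-vectors are forced to be $e_1-e_2,\dots,e_4-e_5$, and the $3$-vector is forced to be $e_5-e_6-e_7$.
\item The $4$-vector must then lie in $\operatorname{span}(e_6,\dots,e_9)$ with $c_6+c_7=1$, which is impossible for norm $4$.
\end{itemize}
Your dual lattice $[2^{[9]},7,3,2,2]$ in $-\Z^{15}$ is correct.
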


\begin{proof}
We start by computing the continued fraction expansion:
\[
397/39 = [11,2^{[4]},3,4]^- = [10,-5,2,4]^-,
\]
so $L(397,39)$ is obtained as surgery along a 2-component link in $L(7,4)$ (since $7/4 = [2,4]^-$).
However, $L(7,4)$ is obtained by $(-7)$-surgery on the torus knot $-T(2,3)$ by~\cite{Moser}, and hence $s_\Z(L(397,39)) \le 3$.

\vskip 0,2cm

We now turn to indefinite surgery presentations.
We start by classifying indefinite binary quadratic forms of determinant $-397$ (which is prime):
\[
19 ^6 19 ^{13} 12 ^{11} 23 ^{12} 11 ^{10} 27 ^{17} 4 ^{19} 9 ^{17} 12 ^{19} 3 ^{17} 36 ^{19} 1, \quad 6 ^{17} 18 ^{19} 2.
\]
Here, we use reflection symmetry to abbreviate the cycles; the corresponding full cycles are longer. The first one represents $1$ as a square, so its Jacobi symbol is 1, but $397$ is a prime congruent to 1 modulo 4, so we get
\[
\Jac{39}{397} = \Jac{3}{397} \Jac{13}{397} = \Jac{1}{3} \Jac{7}{13} = \Jac{-1}{7} = -1.
\]
So we are left with the second one, which is even.

Note that neither (1a) nor (1b) in Theorem~\ref{thm:criteria2} obviously applies here.
However, a direct computation using Ozsv\'ath and Szab\'o's recursive formula for correction terms of lens spaces\footnote{Note that Oszv\'ath and Szab\'o use the opposite orientation convention for lens spaces, so their $-L(p,q)$ is our $L(p,q)$, which is $-p/q$-surgery along the unknot.}, and the fact that the spin structure on $L(p,q)$ is indexed by either $\frac{q-1}2$ or $\frac{p+q-1}2$, we have
\[
d_{\textup{spin}}(L(397,13)) = -6.
\]
Thus, Theorem~\ref{thm:criteria2}(2) applies to rule out an indefinite integral $2$-component surgery description.

\vskip 0,2cm

The arguments for showing that there are no definite surgery presentations along a 2-component link make use of Proposition~\ref{p:donaldson} and follow the same line of ideas as above. Again, we do not present the details of these arguments.
\end{proof}

\section{Smooth embeddings in \texorpdfstring{$\#^2(\mathbb{CP}{}^2\# \overline{\mathbb{CP}}{}^2)$}{small connected sums of complex projective planes}}

In this section, we prove Theorem~\ref{thm:noembedding}.
We start with a standard lemma that is well-known to experts. We include it for completeness.

\begin{lemma}\label{lem:splitting}
    Suppose that $Y$ is a rational homology $3$-sphere such that $H_1(Y)$ has square-free order.
    If $Y$ locally-flatly embeds in a closed, oriented $4$-manifold $X$ with $H_1(X) = 0$, then it separates $X$ into two pieces, each with trivial $H_1$.
\end{lemma}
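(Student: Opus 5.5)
The plan is to combine Alexander duality, for the separation and the vanishing of $H_1$ on each side, with Mayer--Vietoris, for the square-free bookkeeping. First, $H_1(X)=0$ gives $H^1(X;\Z/2)=0$ by universal coefficients, so every codimension-one embedded closed submanifold separates: its $\Z/2$-fundamental class is Poincaré dual to a class in $H^1(X;\Z/2)=0$. Since $Y$ is connected, $X \setminus Y$ has exactly two components. Using a collar, or in the locally flat case a bicollar, which exists for locally flat codimension-one embeddings by Brown's theorem, write $X = A \cup_Y B$, where $A$ and $B$ are compact $4$-manifolds with $\partial A = Y = -\partial B$.

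Next apply Mayer--Vietoris to $X = A\cup_Y B$:
\[
H_2(X)\to H_1(Y)\to H_1(A)\oplus H_1(B)\to H_1(X)=0 .
\]
So $H_1(A)\oplus H_1(B)$ is a quotient of the finite cyclic group $H_1(Y)$; in particular both $H_1(A)$ and $H_1(B)$ are finite and cyclic, of coprime orders whose product divides $|H_1(Y)|$.

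The main step is to show that these orders are $1$. Here I would use the standard duality for a rational homology ball boundary. Because $Y$ is a $\Q HS^3$, the kernel $K_A$ of $H_1(Y)\to H_1(A)$ satisfies a ``half-lives-half-dies'' relation with respect to the linking form. Concretely, the torsion exact sequences of $(A,Y)$ and Lefschetz duality give $|H_1(Y)| = |T|^2\cdot|\det Q_A|$-type relations. The cleaner route is to take the image of $H_1(Y)\to H_1(A)$ (all of $H_1(A)$ is in this image, since the map is surjective) and its annihilator. One has $K_A^\perp \supseteq K_A$ up to the determinant part, and $|\mathrm{coker}(H_1(Y)\to H_1(A))|$ and $|\mathrm{tors}\, H_1(A)|$ appear squared in $|H_1(Y)|$.

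I would argue as follows. Let $t_A=|\mathrm{Tors} H_1(A)| = |H_1(A)|$. By Lefschetz duality, $\mathrm{Tors} H_1(A)\cong \mathrm{Tors} H^2(A,Y)\cong \mathrm{Tors} H_2(A,Y)$ (torsion parts of $H^2$ and $H_1$ correspond). The long exact sequence of the pair then shows that the order of $H_1(Y)$ equals $t_A^2\cdot |\det Q_A|$ divided by the appropriate cokernel order. With $H_1(Y)\to H_1(A)$ surjective, this cokernel term is trivial, which gives $t_A^2 \mid |H_1(Y)|$. Square-freeness then forces $t_A=1$, and symmetrically $t_B=1$.

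The obstacle is making this divisibility precise. I would first try the linking-form argument: the subgroup $K=\ker(H_1(Y)\to H_1(A))$ is contained in its own annihilator modulo the contribution of the intersection form, and the torsion of $H_1(A)$ injects into $\mathrm{Hom}(K',\Q/\Z)$ for a subquotient $K'$ of $H_1(Y)$ in such a way that $|\mathrm{Tors} H_1(A)|^2$ divides $|H_1(Y)|$, as in the proof that rational balls bounded by $Y$ have $|H_1(Y)|$ a square. If that is messy, the fallback is a localization: for each prime $r$ dividing $t_A$, tensor with $\Z_{(r)}$. Since $r^2\nmid|H_1(Y)|$, the $r$-part of $H_1(Y)$ is $\Z/r$ with nondegenerate linking form. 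This $\Z/r$ cannot be both killed in $A$ (which makes it isotropic) and map onto the $r$-torsion of $H_1(A)$ (which makes it nonisotropic via duality), a contradiction.
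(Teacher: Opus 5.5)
Your strategy is the paper's. You separate $X$ along $Y$, use Mayer--Vietoris for finiteness of $H_1(A)$ and surjectivity of $H_1(Y)\to H_1(A)$, then use the sequence of the pair $(A,Y)$ with Lefschetz duality to get $|H_1(A)|^2\mid|H_1(Y)|$. Reading surjectivity straight off Mayer--Vietoris is slightly quicker than the paper's route through $H_1(A,Y)=0$. But that final divisibility, which is the whole content of the lemma, is only asserted (``$t_A^2\cdot|\det Q_A|$ divided by the appropriate cokernel order''), and you say yourself it is the obstacle.

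The missing step is the following. Consider
\[
0 \to H_2(A) \xrightarrow{\ \alpha\ } H_2(A,Y) \to H_1(Y) \to H_1(A) \to 0 .
\]
The torsion subgroup $T$ of $H_2(A,Y)$ is isomorphic to $H_1(A)$, via $\mathrm{Tors}\,H_2(A,Y)\cong\mathrm{Tors}\,H^2(A)\cong\mathrm{Ext}(H_1(A),\Z)$. Your intermediate group $\mathrm{Tors}\,H^2(A,Y)$ is the wrong one; it equals $\mathrm{Tors}\,H_1(A,Y)$. If $H_2(A)$ is torsion-free, then $T\cap\operatorname{im}\alpha=0$, so $T$ injects into $\coker\alpha=\ker(H_1(Y)\to H_1(A))$. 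Hence $|H_1(Y)|=|\ker|\cdot|H_1(A)|$ is divisible by $|H_1(A)|^2$.

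Torsion-freeness of $H_2(A)$ is exactly what you never establish, and it matters: for finite $H_1(A)$ the general relation is $|H_1(Y)|\cdot|\mathrm{Tors}\,H_2(A)|^2=|H_1(A)|^2\,|\det Q_A|$. You can get it in either of two ways:
\begin{itemize}
\item From Mayer--Vietoris (the paper's route): $0=H_2(Y)\to H_2(A)\oplus H_2(B)\to H_2(X)$ is exact, and $H_2(X)$ is free because $H_1(X)=0$.
\item From your surjectivity: it gives $H_1(A,Y)=0$, and $\mathrm{Tors}\,H_2(A)\cong\mathrm{Tors}\,H^2(A,Y)\cong\mathrm{Tors}\,H_1(A,Y)$.
\end{itemize}

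Neither fallback fills the gap. The linking-form sketch is too vague to check. The localization argument is wrong as stated. If $r\mid|H_1(A)|$ and $H_1(Y)\to H_1(A)$ is onto, then the $r$-part $\Z/r$ of $H_1(Y)$ maps isomorphically onto the $r$-part of $H_1(A)$. So nothing forces it to be killed in $A$, and there is no contradiction. The kernel is also not isotropic in general: for the disc bundle over $S^2$ with Euler number $-p$, all of $H_1(L(p,1))$ dies, yet its linking form is nondegenerate. The actual contradiction comes from a \emph{second} factor $r$ in $\ker(H_1(Y)\to H_1(A))$, supplied by the $r$-torsion of $H_2(A,Y)$, which is precisely the argument above.
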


\begin{proof}
    We start by proving that $Y$ separates.
    Since $Y$ and $X$ are both orientable and $Y$ is locally-flatly embedded, $Y$ has a tubular neighbourhood which is homeomorphic to $[-1,1]\times Y$.
    If $Y$ did not disconnect $X$, there would exist a loop in $X \setminus Y$ connecting $(-1,y)$ to $(1,y)$ for some $y \in Y$. Concatenating this loop with the arc $[-1,1]\times\{y\}$ gives a geometric dual to $[Y] \in H_3(X)$, which contradicts the assumption that $H_1(X) = 0$.

    Let now $X_1, X_2$ denote the closures of the two connected components into which $Y$ splits $X$.
    It suffices to show that $H_1(X_1) = 0$.
    First, from the Mayer--Vietoris long exact sequence associated to the decomposition of $X$ as $X_1 \cup X_2$, we extract
    \[
    H_1(Y) \to H_1(X_1) \oplus H_1(X_2) \to H_1(X) = 0,
    \]
    so $H_1(X_1)$ is finite.
    We also see that
    \[
    0 = H_2(Y) \to H_2(X_1) \oplus H_2(X_2) \to H_2(X),
    \]
    so $H_2(X_1)$ is free, since $H_2(X)$ is.
    Since $H_1(X_1)$ is torsion, by the universal coefficient theorem $H_1(X_1) = 0$ if and only if $H^2(X_1)$ is torsion-free, and the latter group is isomorphic to $H_2(X_1,Y)$ by Poincar\'e--Lefschetz duality.
    
    From the long exact sequence of the pair $(X_1,Y)$, together with the observations we have made so far, we get:
    \[
    0 = H_2(Y) \to H_2(X_1) \stackrel{\alpha}{\longrightarrow} H_2(X_1,Y) \to H_1(Y) \to H_1(X_1) \to H_1(X_1,Y) \to 0
    \]
    Note that $H_1(X_1,Y)$ is trivial, since it is finite (by the long exact sequence) and isomorphic to the torsion in $H^2(X_1,Y) \cong H_2(X_1)$.
    Moreover, the torsion in $H_2(X_1,Y)$ is isomorphic to $H_1(X_1)$ by the same argument, and the image of $\alpha$ is non-torsion.
    All in all, we get the short exact sequence
    \[
    0 \to \coker \alpha \to H_1(Y) \to H_1(X_1) \to 0
    \]
    where $\coker\alpha$ contains a subgroup isomorphic to $H_1(X_1)$, and hence its order is divisible by $|H_1(X_1)|$.
    Thus $|H_1(X_1)|^2$ divides $|{\coker\alpha}|\cdot |H_1(X_1)| = |H_1(Y)|$. Since $H_1(Y)$ is square-free, it follows that $H_1(X_1) = 0$.
\end{proof}

Next, we need a number-theoretic lemma, which was proven in much greater generality by Nagel~\cite{Nagel}.

\begin{lemma}\label{lem:number_theory}
    $f(n) = 192n^2+376n+177$ is square-free for infinitely many values of $n \in \N$.
\end{lemma}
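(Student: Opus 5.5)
The approach is the classical square-free sieve for quadratic polynomials (Estermann, Nagel, Ricci), made explicit for this particular $f$. Note first that $f(n) = 3a^2+5a-5$ with $a = 8n+7$, so these are exactly the orders $p$ of the lenses $L_a$ from Theorem~\ref{thm:nobound}. The discriminant of $f$ is $376^2-4\cdot192\cdot177 = 5440 = 2^6\cdot 5\cdot 17$, which is not a square. Hence $f$ is irreducible over $\Q$ and has no repeated factors.

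I would show that the number $S(N)$ of $n\le N$ with $f(n)$ square-free satisfies $S(N) = cN + o(N)$, where
\[
c = \prod_{r}\Bigl(1-\frac{\rho(r^2)}{r^2}\Bigr),
\]
the product runs over primes $r$, and $\rho(m)$ is the number of residues $n \bmod m$ with $m\mid f(n)$. Then I would check that $c>0$. For positivity it suffices that every factor is positive, i.e.\ that no $r^2$ divides $f(n)$ for all $n$.
\begin{itemize}
\item For $r=2$: $f(n)$ is always odd.
\item For $r=3$: $f(n)\equiv n \pmod 3$, so $f(1)=745$ is not divisible by $9$.
\item For $r\nmid 2\cdot 3\cdot 5\cdot 17$: Hensel's lemma gives $\rho(r^2)\le 2$, and $\sum_r 2/r^2$ converges, so the product converges.
\item For $r=5,17$: a direct check that $\rho(r^2)<r^2$ (e.g.\ $f(0)=177$ is prime to both) finishes.
\end{itemize}

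To get the asymptotic, write $S(N) = \sum_{n\le N}\sum_{d^2\mid f(n)}\mu(d)$ and split the primes $r$ with $r^2\mid f(n)$ into three ranges.
\begin{itemize}
\item Small primes $r\le \xi=\tfrac13\log N$: by the Chinese remainder theorem, the count of $n\le N$ with no $r^2\mid f(n)$ for $r\le\xi$ is $N\prod_{r\le\xi}(1-\rho(r^2)/r^2)+O(4^{\pi(\xi)}\cdot\ldots)$, which is $cN+o(N)$.
\item Medium primes $\xi<r\le N/\log N$: the number of $n\le N$ with $r^2\mid f(n)$ is at most $2(N/r^2+1)$. Summing gives $O(N/\xi + N/\log N) = o(N)$.
\item Large primes $r> N/\log N$: this is the main obstacle, since $f(n)\asymp N^2$ allows $r$ up to about $N$.
\end{itemize}

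For the large primes, write $f(n)=m r^2$ with $m\ll N^2/r^2\ll (\log N)^2$. Completing the square,
\[
(384n+376)^2 - 768\,m\,r^2 = 5440.
\]
So for each fixed $m$ we are counting solutions $(X,Y)$ of the generalized Pell equation $X^2-768mY^2=5440$ with $X\ll N$. There are finitely many solution classes, the number of classes being bounded polynomially in $m$, and each class grows geometrically along the unit group. Hence each $m$ contributes $O_\epsilon(m^{\epsilon}\log N)$ solutions. Summing over $m\ll(\log N)^2$ gives $O((\log N)^{3})=o(N)$.

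Combining the three ranges gives $S(N)=cN+o(N)$ with $c>0$, so infinitely many $f(n)$ are square-free. Alternatively, the whole statement follows directly from Nagel's theorem~\cite{Nagel}: an irreducible quadratic with no fixed square divisor takes square-free values for a positive proportion of $n$. I would present the sieve above as a sketch and cite~\cite{Nagel} for full rigor.
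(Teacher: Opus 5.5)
Your argument is correct, but it takes a heavier route than the paper. The paper proves no asymptotic and uses no Euler product. It determines the local behaviour prime by prime: $f(n)$ is odd; $9\mid f(n)$ exactly when $n\equiv 3\pmod 9$; $5^2$ and $17^2$ never divide $f(n)$, because $12f(n)=(48n+47)^2-85$ with $85$ square-free; and for every other prime $r$ at most two residue classes modulo $r^2$ occur, by Hensel. It then bounds the density of square-free values from below by the union bound $1-\frac19-\sum_{r\neq 2,3,5,17}\frac{2}{r^2}>\frac89-2\bigl(\frac{\pi^2}{6}-\frac54\bigr)>0$.

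You instead prove $S(N)=cN+o(N)$ and get $c>0$ from positivity of each Euler factor plus convergence. This needs less local information (only $\rho(r^2)<r^2$ at $r=3,5,17$) and no numerical estimate, at the cost of running the sieve. Your splitting into ranges also makes explicit something the paper uses tacitly: subtracting the densities $2/r^2$ for infinitely many primes at once is legitimate only after the primes with $r^2>N$ are controlled.

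You overrate that difficulty, though. Since $f(n)\le f(N)<200N^2$ for large $N$, only primes $r<15N$ can occur. Your trivial bound $2(N/r^2+1)$, summed over all $\xi<r<15N$, is already $O(N/\xi)+2\pi(15N)=o(N)$. So the large-prime range is no obstacle for a quadratic polynomial, and the Pell-equation step can simply be dropped. That step is correct, and would be cleaner written as $X^2-12mr^2=85$ with $X=48n+47$.
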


\begin{proof}
    We will estimate for how many $n\in\N$ the number $f(n)$ is divisible by the square of a prime. We do this for each prime $p$ separately.
    \begin{itemize}
        \item For $p=2$, we compute $f(n)\equiv177\equiv1 \pmod 2$, and thus $f(n)$ is odd. In particular, it is not divisible by $2^2$.
        \item For $p=3$, we have $f(n)\equiv 376n \equiv n \pmod 3$. So if $3$ divides $f(n)$ then $n\equiv0\pmod 3$. If we write $n=3k$, we compute 
        \begin{align*}
            f(3k)=1728k^2+1128k+177\equiv 3k+6 = 3(k+2) \pmod 9.
        \end{align*}
        Therefore $9$ divides $f(n)$ if and only if $k\equiv 1 \pmod 3$ if and only if $n\equiv 3 \mod 9$. Thus, exactly $1/9$ of all natural numbers $n$ have the property that $9$ divides $f(n)$.
        \item Next, we consider the case $p=5$ or $p=17$, where we claim that $p^2$ never divides $f(n)$. Indeed, if there were an integer $n$ such that $p^2$ divides $f(n)$, then we have
        \begin{align*}
            0\equiv12f(n)=(48n+47)^2-85 \pmod {p^2},
        \end{align*}
        which implies
        \begin{align*}
            0\not\equiv 5\cdot 17=85\equiv(48n+47)^2 \pmod {p^2}.
        \end{align*}
        Since $p$ is either $5$ or $17$, we have that $(48n+47)^2 \equiv 0 \pmod p$, and thus also $(48n+47) \equiv 0 \pmod p$. But this yields the contradiction $(48n+47)^2 \equiv 0 \pmod {p^2}$.
        \item For any other prime $p\notin \{2,3,5,17\}$, we first claim that $f$ has at most two roots modulo $p^2$. Indeed, since $p\neq 2,3$, the leading coefficient $192=3\cdot2^{6}$ is non-zero modulo $p$, and thus the reduction of $f$ modulo $p$ is a quadratic polynomial, so it has at most two roots modulo $p$. Next, we show that every root of $f$ modulo $p$ lifts to a unique root modulo $p^2$. 
        Every lift modulo $p^2$ of a root $\bar n \in \Z$ modulo $p$, is of the form $n = \bar n + kp$ for some $k$. Then we compute
    \[
    f(n) \equiv f(\bar n) + (384\bar n+376)pk \pmod{p^2},
    \]
    which vanishes if and only if
    \[
    (384\bar n+376)k \equiv -\frac{f(\bar n)}p \pmod p.
    \]
    Since $p$ does not divide the discriminant 
    $$\Delta=376^2-4\cdot192\cdot177=5440=2^6\cdot5\cdot17,$$ we have that $\bar n$ is not a double root of $f$. Thus, 
    $$384\bar n + 376 = f'(\bar n) \neq 0 \mod p,$$ which means that every root $\bar n$ uniquely determines a $k$ and therefore $\bar n$ has a unique lift.
    Thus, at most $2/p^2$ of all natural numbers $n$ have the property that $p^2$ divides $f(n)$.
    \end{itemize}
    
    Now we can estimate the density $\rho$ of the values of $n$ for which $f(n)$ is square-free as follows
    \[
    \rho \ge 1 - \frac19 - \sum_{p \neq 2,3,5,17} \frac2{p^2} > \frac{8}{9} - 2\sum_{n \ge 3} \frac1{n^2} = \frac{8}{9} - 2\left(\frac{\pi^2}6-1-\frac{1}{4}\right) > \frac{8}{9} -\frac{8}{10} > 0,
    \]
    so the density of the values of $n$ for which $f(n)$ is square-free is positive, and hence there are infinitely many such values $n$.
\end{proof}

We can now prove Theorem~\ref{thm:noembedding} from the introduction, asserting that there are infinitely many lens spaces that do not embed in $\#^2(\mathbb{CP}{}^2\# \overline{\mathbb{CP}}{}^2)$.

\begin{proof}[Proof of Theorem~\ref{thm:noembedding}]
    First, we show that there are infinitely many values of $a \equiv 7 \pmod 8$ such that $3a^2+5a-5$ is square-free. For that, we write $a=7+8n$ and compute that $3(8n+7)^2 + 5(8n+7) - 5=192n^2+376n+177=f(n)$. By Lemma~\ref{lem:number_theory}, there exist infinitely many values for $n$ such that $f(n)$ is square-free. For instance, $a = 7$ works, yielding $L(177,29)$.

    Pick such a value, and suppose that $L_a := L(3a^2+5a-5,3a+8)$ smoothly embeds in $\#^2(\mathbb{CP}{}^2\# \overline{\mathbb{CP}}{}^2)$.
    By Lemma~\ref{lem:splitting}, $L_a$ splits $\#^2(\mathbb{CP}{}^2\# \overline{\mathbb{CP}}{}^2)$ into two pieces, each of which has trivial $H_1$.
    From the Mayer--Vietoris long exact sequence, at least one of them has $b_2 \le 2$.
    Up to taking a connected sum with $\mathbb{CP}{}^2$ or $\#^2\mathbb{CP}{}^2$ if necessary, $L_a$ bounds a smooth 4-manifold $W$ with $H_1(W) = 0$ and $b_2(W) = 2$, but this contradicts Theorem~\ref{thm:nobound}.
\end{proof}

\section{Ternary quadratic forms cannot obstruct surgery number \texorpdfstring{$3$}{2}}\label{sec:Edmonds}

In this section, we discuss the limitations of our approach for showing that the surgery number has to be at least $3$.

We start off with ternary forms.
First, note that Lin's inequality on spin fillings of 3-manifolds coming from Pin(2)-Floer homology does have a generalisation to the case $b_2^+ = 2$.
So, in principle, one could hope to use the same approach as in Theorems~\ref{thm:criterion} and~\ref{thm:criteria2}.
The argument would need to go like this. Show that there exists a lens space $L(p,q)$ with $p$ odd such that
\begin{itemize}
    \item $\lambda_{L(p,q)}$ is not represented by an odd indefinite ternary form;
    \item say, $d_\textup{spin}(L(p,q))$ is large; and
    \item $L(p,q)$ does not bound a smooth, definite 4-manifold $Z$ with $b_2(Z) = 3$.
\end{itemize}

The last point is rather easy to achieve using Proposition~\ref{p:donaldson} (see, for instance, \cite{AMP}).
The second point, too, is, and even in conjunction with the last one.
The first point, however, \emph{cannot} be achieved, as the following proposition shows.

\begin{proposition}
    For any $p$ and any linking form $\lambda$ on $\Zp$, $\lambda$ is represented by an odd, indefinite ternary quadratic form.
\end{proposition}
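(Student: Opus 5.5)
Let $\lambda=\langle \frac qp\rangle$ with $\gcd(q,p)=1$. The plan is to reduce to the binary case and then stabilise by a unimodular rank-one summand. The key observation is that if $Q$ presents $\lambda$, then so does $Q\oplus(\pm1)$: the cokernel is unchanged and the inverse matrix acquires an integral entry, which vanishes in $\Q/\Z$. Moreover, $Q\oplus(\pm1)$ is odd because of the $(\pm1)$ summand. It is indefinite as soon as the sign $\pm$ is chosen opposite to some eigenvalue sign of $Q$. So it suffices to find \emph{any} nondegenerate integral symmetric matrix $Q$ of rank at most $2$ presenting $\lambda$.

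For $p$ odd, Proposition~\ref{prop:representability} gives what we need directly. The form $\lambda$ is always presented by a definite binary form $Q$, even in the exceptional case $J(\lambda)=-1$, $p\equiv1\pmod 8$. If $Q$ is positive definite we take $Q\oplus(-1)$, and if it is negative definite we take $Q\oplus(1)$. In either case the result is odd, indefinite and ternary, and it presents $\lambda$.

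For general $p$, including $p$ even, I would avoid Dirichlet-type arguments altogether and use a rank-one presentation, which is even simpler. The $(1\times1)$ matrix $(-p)$ presents $\langle\frac1p\rangle$, since $-Q^{-1}=\frac1p$. To reach an arbitrary unit $q$, pass to rank two with the matrix
\[
Q=\begin{pmatrix} -cp & bp\\ bp & -a\end{pmatrix},\qquad a\equiv q\pmod p,\quad ac-pb^2=\epsilon,
\]
exactly as in the proof of Proposition~\ref{prop:representability}. For this system the sign $\epsilon$ is free, and we only need \emph{some} solution. I would first try $b=1$, choosing $a\equiv q\pmod p$ such that $a\mid p\pm1$, though that is not always possible. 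The more robust route is to drop $\epsilon$ and rank two: take the rank-three tridiagonal plumbing matrix coming from a continued fraction $p/q'=[x_1,x_2,x_3]^-$ with $q'\equiv q\pmod p$. The plumbing of any continued fraction expansion of $p/q'$ (with arbitrary integer entries) presents $\langle\frac{q'}p\rangle$ up to the sign convention, and we can replace $q'$ by $-q'$ or $p-q'$ as needed.

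The main obstacle is to ensure that this presenting matrix is odd and indefinite, and to handle the even-$p$ case, where by Proposition~\ref{prop:even} even binary forms cannot exist while odd ones must be produced by hand. My first attempt would be to write $p/q'=[x_1,x_2,x_3]^-$ with some $x_i$ odd and with mixed signs. Since $p/q'=x_1-1/(p'/q'')$ with $x_1$ arbitrary, we can choose $x_1$ of either sign and parity. Choosing $x_1$ of opposite sign to the tail makes the form indefinite, and choosing $x_1$ odd makes it odd. For instance, the tail could be $[x_2,x_3]^-$ with $x_3=\pm1$, absorbing the remaining rational number. I would verify that any rational $p/q'$ admits such a length-$3$ expansion; this is done by choosing $x_1$ freely among two consecutive integers of the appropriate sign, which also gives the required parity.
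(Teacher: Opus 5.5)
Your stabilisation step is correct, and it is exactly what the paper does. The paper takes a binary presentation $Q$ of $\lambda$ from Edmonds' Theorem~3.2, valid for every $p$ with no control on sign or parity, and passes to $Q\oplus\langle\mp1\rangle$. For odd $p$, your use of Proposition~\ref{prop:representability} (a definite binary presentation always exists) is a legitimate substitute for Edmonds.

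The gap is the case of even $p$, which the statement includes. There your argument rests on the claim that every $p/q'$ has an integral expansion $[x_1,x_2,x_3]^-$ with $x_1$ of prescribed sign and parity, and this is false. Since $[x_2,x_3]^-=(x_2x_3-1)/x_3$ is in lowest terms, so is $[x_1,x_2,x_3]^-=\bigl(x_1(x_2x_3-1)-x_3\bigr)/(x_2x_3-1)$. Hence $p/q'=[x_1,x_2,x_3]^-$ forces $x_2x_3-1=\pm q'$ and $x_3=\pm(x_1q'-p)$. In particular $x_1$ is not free. With $x_3=\pm1$ the tail $[x_2,\pm1]^-=x_2\mp1$ is an integer, so it cannot absorb a general rational.

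Concretely, $18/13$ has no such expansion at all. You would need a divisor of $14$ congruent to $-18\equiv 8\pmod{13}$, or a divisor of $12$ congruent to $18\equiv 5\pmod{13}$, and neither exists. Changing $q'$ within its residue class or square class modulo $p$ might rescue the approach, but you give no argument for that. Your other attempts ($b=1$ with $a\mid p\pm1$, or the $(1\times1)$ matrix $(-p)$) only reach special forms, as you note.

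What is missing is just a binary presentation for even $p$; your stabilisation then finishes the proof. You can cite Edmonds' Theorem~3.2 as the paper does, or run the Dirichlet argument directly:
\begin{itemize}
\item Since $q$ is odd, there is a prime $r\equiv 3\pmod 4$ with $r\equiv q$ or $r\equiv -q\pmod p$. Set $a=\pm r$ so that $a\equiv q\pmod p$.
\item Because $\Jac{-1}{r}=-1$ and $r\nmid p$, exactly one of $\pm p$ is a square modulo $r$.
\item Hence, for one choice of $\epsilon\in\{\pm1\}$, the equation $ac-pb^2=\epsilon$ has an integral solution.
\item The matrix $\begin{pmatrix}-cp&bp\\bp&-a\end{pmatrix}$ from the proof of Proposition~\ref{prop:representability}, or its negative, then has cyclic cokernel of order $p$ and presents $\lambda$.
\end{itemize}
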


\begin{proof}
    According to~\cite[Theorem 3.2]{Edmonds}, $\lambda$ is represented by a binary quadratic form $Q$ (without any control on its definiteness or parity).
    If $Q$ is positive definite, let $Q' = Q \oplus \langle -1 \rangle$, otherwise let $Q' = Q \oplus \langle +1 \rangle$.
    In either case, $Q'$ is an odd, indefinite ternary quadratic form representing $\lambda$.
\end{proof}

\bibliographystyle{myamsalpha}
\bibliography{main}

\end{document}